\theoremstyle{plain}
\newtheorem*{thm}{Theorem}
\newtheorem{theorem}{Theorem}[section]
\newtheorem{lemma}[theorem]{Lemma}
\newtheorem{proposition}[theorem]{Proposition}
\theoremstyle{definition}
\newtheorem{remark}{Remark}
\newcommand{\N}{\hbox{\ensuremath{\mathbb{N}}}}
\newcommand{\Z}{\hbox{\ensuremath{\mathbb{Z}}}}
\newcommand{\nZ}{\frac{1}{n}\mathbb{Z}}
\newcommand{\R}{\hbox{\ensuremath{\mathbb{R}}}}
\newcommand{\beq}{\begin {equation}}
\newcommand{\eeq}{\end{equation}}
\newcommand{\essinf}{{\rm ess\, inf}\ }
\newcommand{\esssup}{{\rm ess\,sup}\ }
\numberwithin{equation}{section}
\begin{document}


\title[Balian-Low type Theorems ]{Uncertainty Principles and Balian-Low type Theorems in Principal Shift-Invariant Spaces }

\author[Akram Aldroubi]{Akram~Aldroubi} \address[Akram Aldroubi]{Department of Mathematics\\
Vanderbilt University\\ 1326 Stevenson Center\\ Nashville, TN 37240}
\email 
{akram.aldroubi@vanderbilt.edu}

\author[Qiyu Sun]{Qiyu~Sun}
\address[Qiyu Sun]{Department of Mathematics\\
University of Central Florida\\ Orlando, FL 32816}
\email
{qsun@mail.ucf.edu}

\author[Haichao Wang]{Haichao~Wang}
\address[Haichao Wang]{Department of Mathematics\\
Vanderbilt University\\ 1326 Stevenson Center\\ Nashville, TN 37240}
\email 
{haichao.wang@vanderbilt.edu}

\keywords{Shift-invariant spaces, $\frac {1}{n}{\mathbb Z}$-invariance, Uncertainty principle}

\thanks{ The research of A.~Aldroubi is supported in part by NSF Grant DMS-0807464.}

\maketitle

\begin{abstract}
In this paper, we consider the time-frequency localization of the  generator of a  principal shift-invariant space on the  real line which has additional shift-invariance.
 We prove that if a principal shift-invariant space  on the real line is  translation-invariant then
 any of its orthonormal (or Riesz) generators is non-integrable. However, for any $n\ge2$, there exist principal shift-invariant spaces on the real line that are also $\nZ$-invariant  with an integrable orthonormal (or a Riesz) generator $\phi$, but $\phi$ satisfies
 $\int_{\mathbb R} |\phi(x)|^2 |x|^{1+\epsilon} dx=\infty$ for any $\epsilon>0$ and
 its Fourier transform  $\widehat\phi$ cannot decay as fast as $ (1+|\xi|)^{-r}$ for any $r>\frac{1}{2}$.
Examples are constructed to demonstrate that
the above decay properties  for the orthormal generator in the time domain and  in the frequency domain are optimal.
\end{abstract}


\section{Introduction and Main Results}

In this paper, a {\em principal shift-invariant space} on the real line  is a shift-invariant space $V_2(\phi)$ generated by a function
 $\phi\in L^2:= L^2({\mathbb R})$,
\begin{equation}\label{sis.def}
V_2(\phi):=\Big\{\sum_{k\in {\mathbb Z} }c(k)\phi(\cdot-k)\big|\ c:=(c(k))_{k\in \mathbb Z}\in \ell^2:=\ell^2({\mathbb Z})\Big\},\end{equation}
such that $\{\phi(\cdot-k)|\ k\in {\mathbb Z}\}$ is a Riesz basis for $V_2(\phi)$, i.e.,
there exist positive constants $A$ and $B$ such that
\begin{equation}\label{riesz.def}
A\|c\|_{\ell^2} \le\Big \|\sum_{k\in {\mathbb Z} }c(k)\phi(\cdot-k)\Big\|_2\le B\|c\|_{\ell^2}\quad {\rm for \ all}
\ c:=(c(k))_{k\in {\mathbb Z}}\in \ell^2.
\end{equation}
The function $\phi$ is called the {\em generator} of the principal shift-invariant space $V_2(\phi)$, and it is called the {\em orthonormal generator}
if
 $\{\phi(\cdot-k)|\ k\in {\mathbb Z}\}$ is an orthonormal basis for $V_2(\phi)$, i.e., \eqref{riesz.def} holds for $A=B=1$.
Principal shift-invariant spaces have been widely used in  approximation theory,  numerical analysis, sampling theory and
 wavelet theory (see e.g., \cite{AG01, AST05, Bow00,  CS07, UB00} and the references therein).

The classical models of principal shift-invariant spaces on the real line are the Paley-Wiener space $PW$ also known as the space of bandlimited functions (the set of all square-integrable
 functions  bandlimited to $[-1/2, 1/2]$) and the  spline space $S_n^{n-1}$ (the set of all $(n-1)$-differentiable
 square-integrable functions whose restriction on any integer interval $[k,k+1]$ coincides with a polynomial of degree at most $n$).
 More precisely, the  Paley-Wiener space $PW$ is  the shift-invariant space
 generated by the sinc function
 ${\rm sinc} (x)=\frac{\sin \pi x}{\pi x}$, i.e. $PW=V_2({\rm sinc} )$
 and
 the spline space $S^{n-1}_n$ is generated by the $B$-spline $\beta^n$, i.e. $S_n^{n-1}=V_2(\beta^n)$
where $\beta^0$ is the characteristic function on $[0,1)$ and $\beta^n, n\ge 2$, are defined iteratively by
$\beta^n(t)=\int_{\small\R} \beta^{n-1}(t-\tau)\beta^0(\tau) d\tau$.

\bigskip
Now we consider principal shift-invariant spaces that are  invariant under additional  set of translates other than $\Z$. The shift-invariant spaces with additional invariance have been used in the study of
wavelet analysis and sampling theory \cite{Web00,CS03,HL09}, and have been completely characterized in \cite {ACHKM10} for $L^2(\R)$ and in \cite {ACP09} for $L^2(\R^n).$ For a   subspace $V$ of $L^2({\mathbb R})$, let
\begin{equation}\label{tauv.def}
\tau(V):=\Big\{ t\in {\mathbb R}\big|  \ f(\cdot-t) \ {\rm belong \ to} \
 V \ {\rm for\ all} \ f\in V\Big\}.
\end{equation}
  For any closed subspace $V$ of $L^2$, one may verify that $\tau(V)$ is a closed additive subgroup of ${\mathbb R}$, and hence
  $\tau(V)$ is either $\{0\}$, or ${\mathbb R}$, or $\alpha {\mathbb Z}$ for some $\alpha>0$.
It can be shown that  \cite{ACHKM10} for a principal shift-invariant space $V_2(\phi)$ on the real line
 \begin{equation}
\tau(V_2(\phi))={\mathbb R} \ {\rm or} \ \tau(V_2(\phi))=\nZ \  \ {\rm for \ some} \  n\in  {\mathbb N}.
\end{equation}
We say  that a  shift-invariant space $V$ on the real line
 has additional  invariance if $\tau (V)\supsetneq {\mathbb Z}$.
 It is well-known that
 the Paley-Wiener  space $PW$   are invariant under all translations. Thus,
\begin{equation*}
\tau(PW)={\mathbb R}.
\end{equation*}
A closed subspace $V$ of $L^2$ with $\tau(V)={\mathbb R}$ is usually known as a {\em translation-invariant space}. The fact that the space of bandlimited functions $PW$ is  translation-invariant ($\tau(PW)={\mathbb R}$) makes it useful for modeling  signals and  images.  However,  it is known that any function $\phi$ that generates a Riesz basis for $PW$ has slow spatial-decay in the sense that $\phi\notin L^1(\R)$, e.g., ${\rm sinc} (x)=\frac{\sin \pi x}{\pi x}.$ This slow spatial-decay property for the generator of principal shift-invariant spaces $V_2(\phi)$ that are also translation-invariant is not unique to the space of bandlimited functions $PW.$  In fact, in this paper, we first show that the generator $\phi$ of any translation-invariant principal shift-invariant space $V_2(\phi)$ on the real line is not integrable.

\begin{theorem} \label{bandlimited.tm}
Let $\phi\in L^2$ and $\{\phi(\cdot-k)|\ k\in {\mathbb Z}\}$ be a Riesz basis for its generating space $V_2(\phi)$.
If $V_2(\phi)$ is translation-invariant then $\phi\not\in L^1:=L^1({\mathbb R})$.
\end{theorem}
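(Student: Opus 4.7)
The plan is to recast the hypothesis as a Fourier-side support condition on $\widehat\phi$, and then to use continuity of $\widehat\phi$ (granted by Riemann--Lebesgue if $\phi\in L^1$) to derive a contradiction with the connectedness of $\R$.

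The first step is to invoke the Fourier characterization of translation-invariance for a principal shift-invariant space (as established in [ACHKM10]): if $\tau(V_2(\phi))=\R$, then the support $E:=\{\xi\in\R : \widehat\phi(\xi)\neq 0\}$ is, up to a set of measure zero, a measurable section of the projection $\R\to\R/\Z$, that is, $|E\cap(E+k)|=0$ for every $k\in\Z\setminus\{0\}$. I would derive (or recall) this by testing $\phi(\cdot-t)\in V_2(\phi)$ in the Fourier domain: the corresponding $\Z$-periodic multiplier $m_t(\xi)$ must equal $e^{-2\pi it\xi}$ on the support of $\widehat\phi$, and periodicity forces $e^{-2\pi itk}=1$ whenever both $\xi$ and $\xi+k$ lie in the support; since $t$ ranges over all of $\R$, only $k=0$ is allowed. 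Combined with the Riesz basis inequality $A\le\sum_{k\in\Z}|\widehat\phi(\xi+k)|^2\le B$ a.e.\ (equivalent to \eqref{riesz.def}), the section property forces $|E|=1$ and $A\le|\widehat\phi(\xi)|^2\le B$ for a.e.\ $\xi\in E$.

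Next, assume for contradiction that $\phi\in L^1$. By the Riemann--Lebesgue lemma, $\widehat\phi$ is continuous on $\R$ and tends to zero at infinity. Introduce the closed sets
\[
Z:=\{\xi\in\R:\widehat\phi(\xi)=0\},\qquad K:=\{\xi\in\R:|\widehat\phi(\xi)|\ge\sqrt A\},
\]
together with the open set $T:=\{\xi\in\R:0<|\widehat\phi(\xi)|<\sqrt A\}$. From Step~1, $\widehat\phi\equiv0$ a.e.\ on $\R\setminus E$, so $Z$ contains $\R\setminus E$ up to null and hence has infinite measure. Conversely, $|\widehat\phi|\ge\sqrt A$ a.e.\ on $E$, so $K$ contains $E$ up to null and $|K|\ge 1$. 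Finally, $T\cap K=\emptyset$ by definition and $T\cap(\R\setminus E)$ is a null set (since $\widehat\phi\equiv 0$ there), so $T\subseteq E\setminus K$ up to null; but $|E\setminus K|\le|E|-|K\cap E|=1-1=0$, whence $|T|=0$ and, since $T$ is open, $T=\emptyset$.

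The identity $T=\emptyset$ means $\R=Z\sqcup K$, a disjoint union of two nonempty closed sets, which contradicts the connectedness of $\R$. Hence $\phi\notin L^1$. The only delicate ingredient is the structural lemma in Step~1; once the section description of $E$ is in hand, the remainder is a short continuity-and-connectedness argument on $\widehat\phi$.
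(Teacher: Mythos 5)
Your proof is correct, and its second half takes a genuinely different route from the paper's. Both arguments begin with the same key fact (Proposition \ref{sis.prop}, from \cite{ACHKM10}): the integer translates of $E=\{\xi:\widehat\phi(\xi)\neq0\}$ are pairwise disjoint up to null sets, which together with the lower Riesz bound makes $E$ an a.e.\ tile for $\R/\Z$. The paper then uses continuity of $\widehat\phi$ to upgrade the translates of the open set $E$ to genuinely disjoint open sets, invokes connectedness of $\R$ to produce a point $\xi_0$ with $\widehat\phi(\xi_0+k)=0$ for all $k\in\Z$, and finishes with a uniform-continuity estimate showing that the periodization $\sum_k|\widehat\phi(\xi+k)|^2$ falls below its essential lower bound near $\xi_0$. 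You instead extract the quantitative consequences $|E|=1$ and $|\widehat\phi|^2\geq A$ a.e.\ on $E$ (with $A$ the lower periodization constant), note that the intermediate set $T=\{\xi: 0<|\widehat\phi(\xi)|<\sqrt{A}\}$ is open yet null, hence empty, and then disconnect $\R$ as the union of the two disjoint nonempty closed sets $\{\widehat\phi=0\}$ and $\{|\widehat\phi|\geq\sqrt{A}\}$. Your version is slightly more economical: it needs only plain continuity of $\widehat\phi$ (the decay at infinity from Riemann--Lebesgue is never actually used), it avoids the $\epsilon$--$\delta$ uniform-continuity step, and it isolates the clean statement that a continuous function on $\R$ cannot take essentially only the values $0$ and $\geq\sqrt{A}$. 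What the paper's route buys is the explicit point $\xi_0$ at which the whole periodization vanishes --- an ingredient it reuses almost verbatim in the proofs of Theorems \ref{GUP} and \ref{fGUP}, where your connectedness shortcut would not apply as directly. (A purely cosmetic remark: with the normalization \eqref{riesz.def} the periodization bounds read $A^2\leq\sum_k|\widehat\phi(\xi+k)|^2\leq B^2$, so your constants are off by a square; this changes nothing in the argument.)
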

The slow spatial-decay of the generators of shift-invariant spaces that are also translation-invariant is a disadvantage for the numerical implementation of some analysis and processing algorithms.

On the other hand, Riesz bases for the spline spaces $S_n^{n-1}=V_2(\beta^n)$ can be generated by the compactly supported B-spline functions $\beta^n.$ This is one of the reasons that spline spaces   are often used in signal and image processing algorithms as well as in numerical analysis.  Moreover, the B-spline functions $\beta^n$ are also well localized in frequency domain, since $\hat \beta^n(\xi)=O(|\xi|^{-n-1})$. However, the spaces $S_n^{n-1}=V_2(\beta^n)$ have no invariance other than  by integer shifts. In fact, it can be shown that any principle shift-invariant space $V_2(\phi)$ generated by a compactly supported function $\phi$ cannot have any invariance other than by integer shifts \cite {HL09, ACHKM10}.

One way to circumvent some of the problems is to seek  principle shift-invariant spaces $V_2(\phi)$ that are close to being translation invariant, with a  generator $\phi$ which is well localized in both space and frequency domains, i.e., $\phi$ and $\widehat \phi $ are well localized. Specifically, we ask whether we can find a shift-invariant space $V(\phi)$ such that $V(\phi)$ is also $\nZ$  invariant for some $2\le n\in \N $, and such that $\phi$ and $\widehat \phi$ are well localized. It turns out that it is possible to construct functions $\phi$ that are well-localized in time and frequency domains, that generate shift-invariant spaces $V_2(\phi)$ that are also $\nZ$ invariant. However, there are uncertainty and Balian-Low type obstructions, as will be described below. Specifically, the classical uncertainty principle tells us that there is a lower limit on the simultaneous time-frequency localization  of functions as shown by

\begin{thm}[Uncertainty Principle]
For any function $f\in L^2(\mathbb R)$, we have
\begin{equation}\label{UP}
||f||^2_2\leq 4\pi||xf(x)||_2||\xi\widehat{f}(\xi)||_2,
\end{equation}
and the equality holds only if
$$f(x)=ce^{-sx^2}$$
for $s>0$ and $c\in\R$.
\end{thm}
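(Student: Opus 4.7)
The plan is to establish the inequality via integration by parts followed by the Cauchy--Schwarz inequality, then extract the equality case from the conditions for equality in those two steps. First, one may assume that $xf(x)$ and $\xi\widehat f(\xi)$ both lie in $L^2(\R)$, since otherwise the right-hand side is infinite and the bound is trivial. Under the convention $\widehat f(\xi)=\int_{\R}f(x)e^{-2\pi i x\xi}\,dx$, the assumption $\xi\widehat f\in L^2$ and Plancherel's identity imply that $f$ has a distributional derivative $f'\in L^2$ with $\|f'\|_2=2\pi\|\xi\widehat f(\xi)\|_2$.

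The core identity is obtained by integrating $|f(x)|^2$ by parts against the primitive $x$:
\begin{equation*}
\|f\|_2^2=\int_{\R}|f(x)|^2\,dx=-\int_{\R}x\frac{d}{dx}|f(x)|^2\,dx=-2\,\mathrm{Re}\int_{\R}x\,\overline{f(x)}f'(x)\,dx,
\end{equation*}
provided the boundary terms $R|f(R)|^2+R|f(-R)|^2$ vanish along a suitable sequence $R\to\infty$. Applying Cauchy--Schwarz on the right then gives
\begin{equation*}
\|f\|_2^2\le 2\,\|xf\|_2\,\|f'\|_2=4\pi\,\|xf(x)\|_2\,\|\xi\widehat f(\xi)\|_2,
\end{equation*}
which is \eqref{UP}.

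For the equality case, both inequalities must be equalities simultaneously. Cauchy--Schwarz forces $f'(x)=\lambda\,xf(x)$ for some constant $\lambda\in\C$, while equality between the real part and the modulus forces $\lambda\in\R$. Solving the resulting ODE gives $f(x)=c\,e^{\lambda x^2/2}$, and the requirement $f\in L^2(\R)$ forces $\lambda<0$; writing $\lambda=-2s$ with $s>0$ yields the stated Gaussian $f(x)=c\,e^{-sx^2}$.

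The main obstacle is the rigorous justification of the integration by parts for general $f$ in the weighted space $\{f\in L^2:xf,\,f'\in L^2\}$, since the boundary behavior of $x|f(x)|^2$ is not automatic from membership in $L^2$. The cleanest route is to prove the inequality first for Schwartz functions, where the boundary terms vanish because of rapid decay, and then extend to the general case by the density of the Schwartz class in that weighted space.
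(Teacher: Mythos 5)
The paper does not prove this statement: it quotes the Uncertainty Principle as classical background (no proof is given anywhere in the text), so there is no internal argument to compare against. Your proposal is the standard textbook proof and it is correct as outlined: with the normalization $\widehat{f}(\xi)=\int_{\R}f(x)e^{-2\pi i x\xi}\,dx$ one indeed has $\|f'\|_2=2\pi\|\xi\widehat{f}(\xi)\|_2$, so the integration-by-parts identity plus Cauchy--Schwarz yields exactly the constant $4\pi$ in \eqref{UP}, and the equality analysis correctly leads to the Gaussian. Two small refinements are worth making. First, the boundary terms can be dispatched directly on the full weighted space rather than by density: since $\frac{d}{dx}\big(x|f(x)|^2\big)=|f(x)|^2+2x\,\mathrm{Re}\big(\overline{f(x)}f'(x)\big)$ is integrable (by Cauchy--Schwarz applied to $xf$ and $f'$), the function $x|f(x)|^2$ has limits at $\pm\infty$, and these limits must vanish because $\int x^2|f(x)|^2\,dx<\infty$. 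This matters because your proposed density argument transfers the \emph{inequality} to general $f$ but does not by itself transfer the \emph{characterization of equality}; the direct argument does, since Cauchy--Schwarz equality then gives the distributional ODE $f'=\lambda xf$ for the actual extremizer. Second, in the equality case the condition that $-\mathrm{Re}\int x\overline{f}f'$ equal $\|xf\|_2\|f'\|_2$ forces $\lambda$ to be real and nonpositive outright (so appealing to $f\in L^2$ is only needed to exclude $\lambda=0$ with $c\ne0$... which in fact cannot occur since a nonzero constant is not in $L^2$), and the constant $c$ is a priori complex; the paper's restriction to $c\in\R$ holds only up to a unimodular factor.
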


If we impose more conditions, the time-frequency localization deteriorates even further (see e.g., \cite {BCGP03, BCPS06,BHW95,CP07,Gau09,GH04,GHHK02,HP06} and the references therein).  For example, if the Gabor system $\{E_mT_ng\}_{m,n\in\Z}=\{e^{2\pi imx}g(x+n)\}_{m,n\in\Z}$ of a function $g$ is a Riesz basis for $L^2(\mathbb R)$, we  will  have the following Balian-Low theorem:
\begin{thm}[Balian-Low]
Let $g\in L^2(\mathbb R)$. If $\{E_mT_ng\}$ is a Riesz basis for $L^2(\mathbb R)$, then
$$\Big(\int_{-\infty}^{\infty}|xg(x)|^2dx\Big)\Big(\int_{-\infty}^{\infty}|\xi\widehat{g}(\xi)|^2\Big)d\xi=\infty.$$
\end{thm}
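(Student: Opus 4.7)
My plan is to prove the Balian--Low theorem by transferring the problem to the Zak transform and exploiting a topological obstruction created by quasi-periodicity. Recall the Zak transform
$$Zg(x,\xi)=\sum_{k\in\Z}g(x+k)e^{-2\pi i k\xi},$$
which is a unitary isomorphism from $L^2(\R)$ onto $L^2([0,1]^2)$ and satisfies the quasi-periodicity relations $Zg(x+1,\xi)=e^{2\pi i\xi}Zg(x,\xi)$ and $Zg(x,\xi+1)=Zg(x,\xi)$. Under $Z$ the Gabor system $\{E_mT_ng\}_{m,n\in\Z}$ becomes $\{e^{2\pi i(nx-m\xi)}Zg(x,\xi)\}_{m,n\in\Z}$, so a standard argument shows that $\{E_mT_ng\}$ is a Riesz basis for $L^2(\R)$ if and only if there are constants $0<A\le B<\infty$ with $A\le|Zg(x,\xi)|^2\le B$ almost everywhere on $[0,1]^2$.

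Next I would argue by contradiction, assuming both integrals in the statement are finite, i.e.\ $xg\in L^2$ and $\xi\widehat g\in L^2$. A direct calculation from the series defining $Zg$ yields
$$Z(xg)(x,\xi)=xZg(x,\xi)-\frac{1}{2\pi i}\partial_\xi Zg(x,\xi),$$
and the interlacing relation $Z(\widehat g)(x,\xi)=e^{2\pi ix\xi}Zg(-\xi,x)$ gives an analogous formula involving $\partial_x Zg$ when applied to $\xi\widehat g$. Since $Z$ is unitary, the hypothesis forces $\partial_x Zg,\partial_\xi Zg\in L^2([0,1]^2)$, so $Zg$ lies in the Sobolev space $H^1$ of the (quasi-)torus determined by the quasi-periodicity.

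Finally I would derive the contradiction from a winding-number computation. Because $|Zg|$ is bounded below by $\sqrt A>0$, the function $Zg/|Zg|$ takes values in the unit circle and, by the Sobolev regularity together with VMO-degree theory in two dimensions, has a well-defined integer-valued degree on every Lipschitz closed curve in the fundamental domain. Following $Zg/|Zg|$ around the boundary of $[0,1]^2$ and using the two quasi-periodicity identities forces the net change of a continuous argument of $Zg$ to equal $2\pi$ (from the factor $e^{2\pi i\xi}$ crossing the right edge), so the degree around the boundary is $1$; on the other hand the boundary is null-homotopic in the domain on which $Zg$ is continuous and nonvanishing, so the degree must be $0$. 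This contradiction shows that at least one of the two integrals must diverge.

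The main obstacle will be making the last paragraph rigorous: an $H^1$ function on a two-dimensional domain is not, in general, continuous, so the winding-number calculation requires either the VMO/BMO degree of Brezis--Nirenberg, or else a reduction to restrictions $\xi\mapsto Zg(x_0,\xi)$ and $x\mapsto Zg(x,\xi_0)$, which by Fubini lie in $H^1$ of the circle for a.e.\ $x_0,\xi_0$ and therefore admit a continuous argument; the delicate point is to choose such $x_0$ and $\xi_0$ so that the quasi-periodicity identities apply simultaneously on all four sides of the rectangle and the two increments of the argument can be compared.
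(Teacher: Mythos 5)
The paper does not prove this statement: the Balian--Low theorem is quoted purely as classical background (with pointers to \cite{BHW95}, \cite{CP07} and related references), so there is no internal proof to compare yours against. Judged on its own, your outline is the standard Zak-transform proof and its skeleton is correct: the unitarity and quasi-periodicity of $Z$, the equivalence of the Riesz basis property with $A\le |Zg(x,\xi)|^2\le B$ a.e.\ (since the Gabor system maps to $\{e_{m,n}\cdot Zg\}$ with $\{e_{m,n}\}$ an orthonormal basis of $L^2([0,1]^2)$), the identities expressing $Z(xg)$ and $Z(\xi\widehat g)$ through $\partial_\xi Zg$ and $\partial_x Zg$ via $Z\widehat g(x,\xi)=e^{2\pi i x\xi}Zg(-\xi,x)$, and the conclusion that $Zg$ lies in $H^1$ with $|Zg|$ bounded below, are all right; the topological punchline (boundary degree $1$ forced by the factor $e^{2\pi i\xi}$ versus degree $0$ from simple connectivity) is exactly the classical obstruction.

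The one genuine issue is the one you flag yourself: $H^1$ of a two-dimensional domain does not embed in $C^0$, so the winding number of $Zg/|Zg|$ is not defined naively, and this is not a small footnote --- it is essentially the entire technical content of a rigorous proof, and historically the place where early ``proofs'' were incomplete. Your two proposed repairs are, however, precisely the ones that work: the VMO-degree theory of Brezis--Nirenberg applies because $H^1$ embeds in VMO in dimension two and $|Zg|\ge\sqrt A$ keeps the map $S^1$-valued (this is the route used in \cite{Gau09}); alternatively the Fubini reduction to a.e.\ horizontal and vertical lines, on which the restriction of $Zg$ is absolutely continuous with $L^2$ derivative, is carried out carefully in \cite{BHW95} --- there the delicate point you mention is handled by averaging the boundary argument-increment over translates of the unit square rather than insisting on one fixed square, since the full-measure sets of good lines for the two variables must be intersected and the additivity of the argument along the four edges justified. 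So the proposal becomes a complete proof once one of these closing arguments is written out in full. For contrast, a genuinely different route avoiding degree theory altogether is Battle's commutator argument comparing $\langle Pg,Qg\rangle$ with $\langle Qg,Pg\rangle$ for orthonormal generators, extended to Riesz bases by passing to the dual window, whose time-frequency localization is equivalent to that of $g$.
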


\smallskip

The Balian-Low theorem implies that if function $g$ generates a Gabor Riesz basis, then it is  not possible for the functions $g$ and $\widehat{g}$ to be simultaneously well-localized. In particular
$$|g(x)|<\frac{c}{|x|^r}, \ \ |\widehat{g}(\xi)|<\frac{c}{|\xi|^r}$$
cannot hold simultaneously with $r> 3/2$.

\smallskip
\subsection{Balian-Low type results for shift-invariant spaces}
For the case of a shift-invariant space $V_2(\phi)$ which is also $\nZ$ invariant for some $2\le n\in\N$, we  obtain the following surprising result:
\begin {theorem}
\label{GUP}
 If $\phi\in L^2$ has the property that
 $\{\phi(\cdot-k)|\ k\in {\mathbb Z}\}$ is a Riesz basis for its generating space $V_2(\phi)$, and
that $V_2(\phi)$ is $\nZ$-invariant for some $n\ge2$, then for any $\epsilon>0$, we have
\begin{equation}\label{time.tm.eq1} \int_{\mathbb R} |\phi(x)|^2 |x|^{1+\epsilon} dx=+\infty.
\end{equation}
\end{theorem}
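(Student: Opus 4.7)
The plan is to argue by contradiction, supposing that $\int_{\mathbb R}|\phi(x)|^2|x|^{1+\epsilon}\,dx<\infty$ for some $\epsilon>0$. Applying Plancherel to $|x|^{(1+\epsilon)/2}\phi$, this weighted $L^2$-integrability on the time side is equivalent to $\widehat\phi$ lying in the homogeneous Sobolev space $\dot H^{(1+\epsilon)/2}(\mathbb R)$, and combined with $\widehat\phi\in L^2$ it gives $\widehat\phi\in H^{(1+\epsilon)/2}(\mathbb R)$. Since $(1+\epsilon)/2>1/2$, the Sobolev embedding makes $\widehat\phi$ H\"older continuous of order $\epsilon/2$; in particular $\widehat\phi$ is continuous and bounded on $\mathbb R$.

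Next I would extract the support structure from the $\frac{1}{n}\mathbb Z$-invariance. Since $\phi(\cdot-1/n)\in V_2(\phi)$ and $\{\phi(\cdot-k)\}_{k\in\mathbb Z}$ is a Riesz basis, there is a $1$-periodic $m\in L^2[0,1]$ with $e^{-2\pi i\xi/n}\widehat\phi(\xi)=m(\xi)\widehat\phi(\xi)$ a.e. If $\xi$ and $\xi+k$ both lie in $\{\widehat\phi\neq 0\}$, then $m(\xi)=m(\xi+k)$ combined with the identity forces $e^{-2\pi ik/n}=1$, i.e.\ $k\in n\mathbb Z$. Hence $\widehat\phi(\xi)\widehat\phi(\xi+k)=0$ a.e.\ (and pointwise by continuity) for every $k\in\mathbb Z\setminus n\mathbb Z$. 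Setting $U:=\{\widehat\phi\neq 0\}$, an open subset of $\mathbb R$, the sets $U,U+1,\dots,U+(n-1)$ are therefore pairwise disjoint, and their projections into the circle $\mathbb R/n\mathbb Z$ are $n$ pairwise disjoint nonempty open sets. Because $\mathbb R/n\mathbb Z$ is connected and $n\geq 2$, their union cannot exhaust the circle, so there exists $\xi^*\in\mathbb R$ with $\widehat\phi(\xi^*+k)=0$ for every $k\in\mathbb Z$. In particular $\sum_{k\in\mathbb Z}|\widehat\phi(\xi^*+k)|^2=0$.

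The most delicate step, and the one I expect to be the main obstacle, is to turn this isolated zero into a contradiction with the Riesz lower bound $\sum_{k\in\mathbb Z}|\widehat\phi(\xi+k)|^2\geq A>0$ a.e. The plan is to show that the periodization $\xi\mapsto\sum_k|\widehat\phi(\xi+k)|^2$ is actually continuous on $\mathbb R$. Using the uniform Sobolev embedding $H^{(1+\epsilon)/2}(I)\hookrightarrow L^\infty(I)$ on unit intervals $I=[k,k+1]$ yields $\sup_{[k,k+1]}|\widehat\phi|^2\leq C\|\widehat\phi\|_{H^{(1+\epsilon)/2}([k,k+1])}^2$; summing in $k$, the $L^2$ pieces give $\|\widehat\phi\|_{L^2(\mathbb R)}^2$, while the Gagliardo seminorm pieces combine into the double integral over $\{|x-y|<1\}$, each controlled by $\|\widehat\phi\|_{H^{(1+\epsilon)/2}(\mathbb R)}^2<\infty$. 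Therefore $\sum_k\sup_{[k,k+1]}|\widehat\phi|^2<\infty$, the Weierstrass $M$-test delivers uniform convergence of $\sum_k|\widehat\phi(\cdot+k)|^2$ on $[0,1]$, and $1$-periodicity extends continuity to $\mathbb R$. A continuous function that equals or exceeds $A$ on a set of full measure does so everywhere, in contradiction with $\sum_k|\widehat\phi(\xi^*+k)|^2=0$. The difficulty is precisely at this Wiener-amalgam step: bare H\"older continuity of $\widehat\phi$ cannot rule out an isolated downward spike of the periodization at $\xi^*$, and one genuinely needs the full Sobolev regularity $\widehat\phi\in H^{(1+\epsilon)/2}$ to sum the unit-interval suprema and obtain continuity of the periodization.
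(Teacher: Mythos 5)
Your proposal is correct, and while it tracks the paper's proof through its first two stages, it replaces the paper's key final step with a genuinely different argument. Like the paper, you argue by contradiction, note that the hypothesis forces $\widehat\phi$ to be continuous, derive from the \nZ-invariance that the integer translates of the open set $U=\{\widehat\phi\ne 0\}$ by $k\notin n\mathbb Z$ are disjoint from $U$ (you do this via the periodic-multiplier identity for $\phi(\cdot-1/n)$, the paper via the characterization in Proposition \ref{sis.prop2}; both are valid), and use connectedness to produce $\xi^*$ with $\widehat\phi(\xi^*+k)=0$ for all $k\in\mathbb Z$. The divergence is in how this common zero is played against the Riesz lower bound $\sum_k|\widehat\phi(\cdot+k)|^2\ge m$ a.e. The paper truncates, setting $\phi_N=h(\cdot/N)\phi$, and shows through explicit estimates that the average of the periodization over an interval of length $2N^{-1-\epsilon/2}$ about $\xi_0$ tends to $0$, contradicting the essential infimum directly; this is elementary and quantitative but computational. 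You instead upgrade the hypothesis to $\widehat\phi\in H^{(1+\epsilon)/2}(\mathbb R)$ and use the amalgam embedding $H^s\hookrightarrow W(L^\infty,\ell^2)$ for $s>1/2$ (summing uniform local Sobolev embeddings over unit intervals) to conclude that $G_\phi=\sum_k|\widehat\phi(\cdot+k)|^2$ is genuinely continuous, after which the a.e.\ lower bound and the zero at $\xi^*$ are immediately incompatible. This is cleaner and isolates the right structural fact, and you correctly identify that bare H\"older continuity of $\widehat\phi$ would not suffice, so the Sobolev regularity is doing real work; the price is importing fractional Sobolev machinery where the paper stays self-contained. Two small points to tidy: reduce to $\epsilon<1$ at the outset so that $s=(1+\epsilon)/2\in(1/2,1)$ and the Gagliardo-seminorm form of the local embedding applies verbatim (for $\epsilon\ge 1$ the hypothesis is only stronger, or use the classical $H^1$ local embedding instead); and note that disjointness of the projections to $\mathbb R/n\mathbb Z$ requires $U\cap(U+k)=\emptyset$ for every $k\notin n\mathbb Z$, not merely $1\le k\le n-1$, which your multiplier argument does in fact deliver.
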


\bigskip

\begin {remark} {$ $}\\

\begin{enumerate}

\item [(i)] Theorem \ref {GUP} is a Balian-Low type result. If we choose $\epsilon=1$ in \eqref{time.tm.eq1} of Theorem \ref {GUP},  we get $\int_{\mathbb R} |x\phi(x)|^2 dx=+\infty.$ It should be noted that in the Balian-Low Theorem $\int_{-\infty}^{\infty}|xg(x)|^2dx$ can be finite, while in the case of Theorem \ref {GUP} $\int_{\mathbb R} |x\phi(x)|^2 dx$ is always infinite. For the case $\Delta_p= \int_{\mathbb R} |\phi(x)|^2 |x|^{p} dx$, the theorem above should be comparable to the $(1,\infty)$ version of the Balian-Low  Theorem (\cite {BCPS06}, \cite{Gau09}).
\item [(ii)] If we do not require  other invariances besides integer shifts, then we can find $V_2(\phi)$ such that  $\{\phi(\cdot-k): \; k \in \Z\}$  is an orthonormal basis for $V$ and such that $\phi$ decays exponentially  in both time and frequency. In particular for such a $\phi$ it is obvious that $\Big(\int_{-\infty}^{\infty}|x|^\alpha |g(x)|^2dx\Big)\Big(\int_{-\infty}^{\infty}|\xi|^\beta |\widehat{g}(\xi)|^2d\xi\Big)<\infty,$ where $\alpha,\beta>0$ are any positive real numbers.
\end{enumerate}
\end{remark}

\bigskip

There is also a decay restriction in the Fourier domain.  Specifically, the Fourier transform of an integrable generator $\phi$ of a principal shift-invariant space which is $\nZ$-invariant
for some integer  $n\ge 2$ cannot decay faster than $|\xi|^{-1/2-\epsilon}$ for any $\epsilon>0.$

\begin{theorem}\label{fGUP}
 Let $2\le n\in {\mathbb N}$.  Let $\phi \in L^1\cap L^2$ have the property that
 $\{\phi(\cdot-k)|\ k\in {\mathbb Z}\}$ is a Riesz basis for its generating space $V_2(\phi)$,
and that $V_2(\phi)$ is $\nZ$-invariant,  then
  for any $\epsilon>0$,
\begin{equation}\label{frequency.tm.eq1}\sup_{\xi\in\mathbb R} |\widehat\phi(\xi)| |\xi|^{1/2+\epsilon} =+\infty.
\end{equation}
\end{theorem}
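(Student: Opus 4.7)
The plan is to argue by contradiction. Suppose that for some $\epsilon>0$ one has $|\widehat\phi(\xi)|\le C(1+|\xi|)^{-1/2-\epsilon}$ for all $\xi\in\R$. The strategy is to combine the $\nZ$-invariance of $V_2(\phi)$ with the continuity of $\widehat\phi$ (which follows from $\phi\in L^1$) to pin down the support of $\widehat\phi$ on the torus $\R/n\Z$, and then to contradict the Riesz-basis lower bound via a tiling/topology argument.

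First I would extract the support structure. Writing $\phi(\cdot-1/n)=\sum_\ell c(\ell)\phi(\cdot-\ell)$ in $L^2$ and taking Fourier transforms yields $e^{-2\pi i\xi/n}\widehat\phi(\xi)=m(\xi)\widehat\phi(\xi)$ a.e.\ for some $1$-periodic $m\in L^2([0,1))$, so that $m(\xi)=e^{-2\pi i\xi/n}$ on $\{\widehat\phi\ne 0\}$. The $1$-periodicity of $m$ then forces $\widehat\phi(\xi)\widehat\phi(\xi+k)=0$ whenever $k\in\Z\setminus n\Z$, and continuity of $\widehat\phi$ upgrades this from a.e.\ to pointwise. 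Set $S^\circ:=\{\widehat\phi\ne 0\}$ (open), and let $U$ be the image of $S^\circ$ under the canonical projection $\R\to\R/n\Z$. Then $U$ is an open subset of $\R/n\Z$, and the $n$ translates $U,U+1,\dots,U+(n-1)$ are pairwise disjoint. The Riesz-basis lower bound $\sigma(\xi):=\sum_k|\widehat\phi(\xi+k)|^2\ge A^2$ a.e.\ forces $\bigcup_{r=0}^{n-1}(U+r)$ to have full measure in $\R/n\Z$, and disjointness then yields $|U|=1$.

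The topological punchline, which is what restricts the argument to $n\ge 2$, is that $\R/n\Z$ is connected, so a finite disjoint union of proper open subsets cannot equal $\R/n\Z$; consequently
\[
K\,:=\,(\R/n\Z)\setminus\bigcup_{r=0}^{n-1}(U+r)
\]
is nonempty (though of measure zero). For any $\eta_0\in K$ and any $k\in\Z$, writing $k=k'+nj$ with $k'\in\{0,\dots,n-1\}$ and using that $\eta_0\notin U-k'$ on the torus, one sees $\eta_0+k\notin S^\circ$, hence $\widehat\phi(\eta_0+k)=0$. Therefore $\sigma(\eta_0)=0$.

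Finally, the decay hypothesis is used only to promote the a.e.\ Riesz bound on $\sigma$ to a pointwise one: since $\sum_k(1+|k|)^{-1-2\epsilon}<\infty$ for $\epsilon>0$, the series defining $\sigma$ converges uniformly on compact sets, so $\sigma$ is continuous on $\R$. A continuous $1$-periodic function that is $\ge A^2$ almost everywhere must satisfy $\sigma\ge A^2$ everywhere, which contradicts $\sigma(\eta_0)=0$. The main technical obstacle is making the support/tiling assertions precise for general measurable $U$; once that structure is in place, the connectedness of $\R/n\Z$ together with the continuity upgrade closes the argument immediately.
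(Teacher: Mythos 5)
Your proof is correct and follows essentially the same route as the paper's: both use continuity of $\widehat\phi$ (from $\phi\in L^1$) together with the disjointness of the integer translates of the open support set forced by $\nZ$-invariance, invoke connectedness to produce a point $\xi_0$ with $\widehat\phi(\xi_0+k)=0$ for all $k\in\Z$, and use the assumed decay to make $\sum_k|\widehat\phi(\cdot+k)|^2$ continuous so that the Riesz lower bound holds pointwise, yielding the contradiction. The only cosmetic differences are that you re-derive the support characterization from the periodic symbol equation instead of citing the paper's Proposition on $\nZ$-invariance, and you phrase the connectedness step on $\R/n\Z$ rather than on $\R$.
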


We conclude from  Theorem \ref {fGUP} that there is an obstruction to pointwise frequency (non)-localization property.
\begin {remark}
The conclusion of  Theorem \ref {fGUP}  remains valid if we weaken the condition that $\phi \in L^1\cap L^2$  to $\phi \in L^2$ and $\widehat \phi$ is continuous.
\end {remark}

\subsection{ Optimality of the Balian-Low type results}Now, we show the optimality of the results of Theorems \ref{GUP} and \ref {fGUP}.

\smallskip

The following result shows that \eqref {frequency.tm.eq1} in  Theorem \ref {fGUP} is sharp and that for any $2\le n\in \N$ there exists a  generator $\phi \in L^1\cap L^2$ (that depends on $n$) for  $V_2(\phi)$ such that $\widehat \phi$ decays like $|\xi|^{-1/2}$. This is done by constructing time-frequency localized generators $\phi$ that achieve the desired properties:

\begin {theorem}
\label{PointwiseFreqLoc}
For each integer $n\ge 2$, there exists a function $\phi \in L^1\cap L^2$  (and hence $\widehat \phi$ is continuous) which depends on $n$,  such that
$\{\phi (\cdot-k)|\ k\in {\mathbb Z}\}$ is an orthonormal basis for its generating space $V_2(\phi)$,
$V_2(\phi)$ is $\nZ$-invariant,    and

\begin {equation}\int_{\mathbb R}   |\phi(x)|^2 (1+|x|)^{1-\epsilon}dx<\infty,\end{equation}
\begin{equation}
\label{frequency.tm.eq2} \sup_{\xi\in\mathbb R}  |\widehat\phi(\xi)| |\xi|^{1/2}<+\infty.\end{equation}

\end {theorem}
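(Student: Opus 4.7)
The plan is to construct $\phi$ explicitly via its Fourier transform.

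\textbf{Support condition and partition of unity.} The $\nZ$-invariance of $V_2(\phi)$ means $\phi(\cdot-j/n)\in V_2(\phi)$ for $j=1,\ldots,n-1$, which on the Fourier side forces $e^{-2\pi ij\xi/n}\widehat\phi(\xi)=m_j(\xi)\widehat\phi(\xi)$ for some $1$-periodic $m_j$. Since $e^{-2\pi ij\xi/n}$ is not $1$-periodic for $j\not\equiv 0\pmod n$, comparing the two sides on $\{\widehat\phi\ne 0\}$ forces the essential support $E$ of $\widehat\phi$ to satisfy $(E-E)\cap \Z\subseteq n\Z$. I take $E=\bigcup_{m\in\Z}E_m$ with $E_m=[nm-\tfrac12,\ nm+\tfrac12]$, so consecutive components are separated by a gap of length $n-1\ge 1$. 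On each $E_m$ I write $\widehat\phi(\xi)=h_m(\xi-nm)$ with $h_m$ continuous on $[-\tfrac12,\tfrac12]$ and $h_m(\pm\tfrac12)=0$ (for continuity of $\widehat\phi$ on $\R$). Orthonormality of $\{\phi(\cdot-k)\}_{k\in\Z}$ then reduces to
\[
\sum_{m\in\Z}|h_m(t)|^2=1\qquad\text{for a.e.\ }t\in[-\tfrac12,\tfrac12].
\]

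\textbf{Dyadic construction of the bumps.} For each level $k\ge 0$, partition $[-\tfrac12,\tfrac12]$ into $2^k$ equal subintervals, and assign to each such subinterval one smooth bump of height $\asymp 2^{-(k+1)/2}$, supported on a smoothed enlargement of it. Smoothing via a Meyer-type transition function keeps each bump smooth and vanishing at $\pm\tfrac12$, while the squared heights at different levels telescope to $\sum_{k\ge 0}2^{-(k+1)}=1$ at every point. Enumerating all these bumps as $\{h_m\}_{m\in\Z}$ via a fixed bijection that sends level $k$ to indices $|m|\asymp 2^k$, one obtains
\[
\|h_m\|_\infty\asymp 1/\sqrt{|m|},\qquad |{\rm supp}\,h_m|\asymp 1/|m|,
\]
with $\sum_{m\in\Z}|h_m|^2\equiv 1$ on $[-\tfrac12,\tfrac12]$ by construction.

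\textbf{Verification.} The $\nZ$-invariance holds by the support condition and orthonormality by the partition-of-unity identity. For the \emph{Fourier decay}: on $E_m$ with $|m|$ large, $|\widehat\phi(\xi)|=|h_m(\xi-nm)|\le C/\sqrt{|m|}\le C'/\sqrt{|\xi|}$, yielding \eqref{frequency.tm.eq2}. For the \emph{weighted $L^2$-bound}, Plancherel gives $\int_\R|\phi(x)|^2(1+|x|)^{1-\epsilon}\,dx<\infty \Longleftrightarrow \widehat\phi\in H^{(1-\epsilon)/2}(\R)$. A smooth bump of height $h\sim|m|^{-1/2}$ and width $w\sim|m|^{-1}$ contributes $\asymp h^2 w^{1-2s}\asymp|m|^{-(2-2s)}$ to $\|\widehat\phi\|_{H^s}^2$, and $\sum_m|m|^{-(2-2s)}$ converges for every $s<\tfrac12$, covering every $\epsilon>0$. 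Finally, $\phi\in L^1$ follows by expressing $\phi(x)=\sum_m e^{2\pi inmx}H_m(x)$ with $H_m$ the inverse Fourier transform of $h_m$, and checking that $\sum_m\|H_m\|_{L^1(\R)}<\infty$ using the smoothness and dyadic scaling of the $h_m$'s.

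\textbf{Main obstacle.} The delicate point is the bump construction, namely producing smooth bumps whose squared sum equals $1$ \emph{exactly}, whose heights decay precisely as $1/\sqrt{|m|}$, and whose widths are at the critical scale $1/|m|$. This scaling is sharp: narrower bumps would fail to cover $[-\tfrac12,\tfrac12]$ in the partition of unity, while wider bumps would spoil the $H^s$-regularity for $s$ close to $\tfrac12$ and hence the weighted $L^2$ estimate. Once the bumps are in place, the verifications above reduce to routine estimates.
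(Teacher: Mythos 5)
Your overall architecture is the same as the paper's: define $\phi$ on the Fourier side as a family of smooth bumps of height $\asymp|m|^{-1/2}$ and width $\asymp|m|^{-1}$ placed inside the windows $[nm-\tfrac12,nm+\tfrac12]$, so that the periodization of $|\widehat\phi|^2$ is identically $1$ (orthonormality), the support lies in $(-\tfrac12,\tfrac12)+n\Z$ (which gives $\nZ$-invariance), the height gives the $|\xi|^{-1/2}$ decay, and the scaling $h^2w^{1-2s}\asymp|m|^{-(2-2s)}$ gives the weighted $L^2$ bound for every $s<1/2$. This is essentially the paper's $\psi_{\alpha,\beta,n}$ with $\alpha=\beta=1$.

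However, there is a genuine gap in your verification that $\phi\in L^1$, and it is not a removable technicality. You propose to bound $\|\phi\|_{1}$ by $\sum_m\|H_m\|_{L^1}$ with $H_m=(h_m)^\vee$. But $\|H_m\|_{L^1}\ge\|h_m\|_{L^\infty}\asymp|m|^{-1/2}$ for \emph{any} profile, so $\sum_m\|H_m\|_{L^1}=\infty$: the plain triangle inequality over individual bumps can never give integrability here. One must exploit cancellation among the $\asymp 2^k$ bumps at level $k$. The paper arranges for this by taking all level-$j$ bumps to be \emph{identical} translates of a single profile $h_j$ placed at \emph{consecutive} positions $n(\gamma_j+l)$, $0\le l\le\beta_j-1$; the level-$j$ block of $\psi$ then factors as a slowly varying envelope times the Dirichlet kernel $\frac{\sin\beta_jn\pi x}{\sin n\pi x}$, whose averaged $L^1$ norm is $\asymp\log\beta_j\asymp j$ rather than $\beta_j$, yielding the summable series $\sum_jj\,2^{-j\beta/2}$ (Lemma \ref{TimeLemp} with $p=1$, $\gamma=0$). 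Your construction could likely be repaired along the same lines, since your level-$k$ modulations also form an arithmetic progression, but the cancellation must be invoked explicitly; as written the $L^1$ step is false. Two smaller points: (a) your level-$k$ partition of unity of $[-\tfrac12,\tfrac12]$ into $2^k$ bumps that each vanish at $\pm\tfrac12$ cannot have squared sum exactly $2^{-(k+1)}$ near the endpoints; the paper avoids this by letting the levels telescope toward $\pm\tfrac12$, with only levels $j$ and $j+1$ overlapping and $|h_j|^2+|h_{j+1}|^2=1$ on the overlap. (b) Passing from $\sum_m\|b_m\|_{H^s}^2$ to $\|\sum_mb_m\|_{H^s}^2$ requires an almost-orthogonality argument; this is routine for $s<1/2$ with disjointly supported bumps, but should be stated.
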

\begin {remark} ${}$

\begin {enumerate}
\item [(i)]Note that by giving up the translation invariance and only allowing $1/n$ invariance as in Theorem \ref {PointwiseFreqLoc}, we are able to have an $L^1$ generator, while this is not possible for translation invariance as shown in Theorem \ref {bandlimited.tm}.
\item [(ii)] Note that Theorem \ref {PointwiseFreqLoc} shows the optimality of both Theorems \ref  {GUP} and \ref {fGUP} simultaneously.
\end {enumerate}
\end{remark}

We now turn our attention to the integral measure of time-frequency localization, and show that  \eqref {time.tm.eq1} in Theorem \ref {GUP} is nearly optimal.

\begin{theorem}\label{time.tm}  For any $2\le n\in \N$, $\epsilon>0$,  $\gamma \ge0,$ $\delta>0$, $1\le q<\infty$  with $1+\delta-q/2<1/(2\gamma),$ there exists  $\phi \in L^2$ (that depends on $\epsilon,\delta, q,\gamma, n$) such that
$\{\phi (\cdot-k)|\ k\in {\mathbb Z}\}$ is an orthonormal basis for its generating space $V_2(\phi)$,
$V_2(\phi)$ is $\nZ$-invariant and $\phi$ satisfies the following conditions:
\begin{enumerate}
\item $\int_{\mathbb R}   |\phi(x)|^2 (1+|x|)^{1-\epsilon}dx<\infty,$

\item $\int_{\mathbb R}   |\phi(x)| (1+|x|)^\gamma dx<\infty,$

\item $\int_{\mathbb R}   |\widehat\phi(\xi)|^q (1+|\xi|)^{\delta} d\xi<\infty.$

\end{enumerate}
\end{theorem}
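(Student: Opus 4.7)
The plan is to produce $\phi$ by a direct construction of $\widehat\phi$ in the frequency domain, and then tune three free parameters (the location of the support intervals, the pointwise size of the bumps, and the smoothness of the bumps) to satisfy the three integrability requirements simultaneously.

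First I would record (from the characterization in \cite{ACHKM10}, already used to prove Theorems \ref{GUP}--\ref{fGUP}) the fact that $V_2(\phi)$ is $\nZ$-invariant iff there exists a $1$-periodic $m\in L^\infty$ with $e^{-2\pi i\xi/n}\widehat\phi(\xi)=m(\xi)\widehat\phi(\xi)$ a.e., which is equivalent to the ``single-coset'' support condition: for a.e.\ $\xi$ with $\widehat\phi(\xi)\neq 0$, the set $\{k\in\Z:\widehat\phi(\xi+k)\neq 0\}$ lies in one coset of $n\Z$ inside $\Z$. The simplest support that trivially satisfies this is $E:=\bigcup_{m\in\Z}[nm,nm+1]$: if $\xi,\xi+k\in E$ with $k\in\Z$, the length-$1$ structure of the intervals forces $k\in n\Z$. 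This is the target support of $\widehat\phi$.

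Next I would reduce everything to a one-parameter family on $[0,1]$: writing $\widehat\phi(\xi)=g_m(\xi-nm)$ on $[nm,nm+1]$, orthonormality becomes the ``vector-valued Plancherel identity''
\begin{equation*}
\sum_{m\in\Z}|g_m(\xi)|^2=1\quad\text{for a.e.\ }\xi\in(0,1),
\end{equation*}
while continuity of $\widehat\phi$ at each support boundary forces $g_m(0)=g_m(1)=0$. Pick a fixed smooth bump $\rho\in C^\infty_c(0,1)$ with $\int_0^1|\rho|^2=1$ and, for a sparse sequence $\{m_k\}_{k\in\Z}\subset\Z$ with $|m_k|\asymp|k|^\beta$ for a parameter $\beta>1$ to be chosen, construct $g_{m_k}$ as a properly weighted translate/dilate of $\rho$ together with a single ``bulk'' partition-of-unity-squared function supported in $(0,1)$, so that (a) the family $\{|g_{m_k}|^2\}_k$ sums to $1$ a.e., (b) each $g_{m_k}$ vanishes at the interval endpoints, and (c) $\|g_{m_k}\|_\infty$ decays at a prescribed rate $\asymp(1+|m_k|)^{-1/2}$, saturating the pointwise bound allowed by Theorem \ref{fGUP}. (For $m\notin\{m_k\}$ we set $g_m\equiv 0$.) Because the supports of the $g_{m_k}$'s are embedded in the single fixed interval $[0,1]$ and the bumps only need to sum to $1$ in the $L^2$-squared sense, infinitely many overlapping weighted bumps of controlled $L^\infty$ size can be arranged to give $\sum|g_{m_k}|^2\equiv 1$; here is where I expect the bookkeeping to be the most delicate.

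Once $\phi$ is constructed in this way, I would verify the three integral estimates. The integral frequency estimate is the easiest: since $\widehat\phi$ is concentrated on $\bigcup_k[nm_k,nm_k+1]$ with $\|\widehat\phi\|_{L^\infty(nm_k,nm_k+1)}\lesssim(1+|m_k|)^{-1/2}$,
\begin{equation*}
\int_{\R}|\widehat\phi(\xi)|^q(1+|\xi|)^\delta\,d\xi
\;\lesssim\;\sum_{k}(1+|m_k|)^{-q/2+\delta}
\;\asymp\;\sum_{k}|k|^{\beta(\delta-q/2)},
\end{equation*}
which converges iff $\beta(q/2-\delta)>1$. The hypothesis $1+\delta-q/2<1/(2\gamma)$ is exactly what guarantees that $\beta$ can be chosen so this holds while still permitting the simultaneous time decay, as follows. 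The spatial estimates use standard Fourier/smoothness pairing: $\int|\phi(x)|^2(1+|x|)^{1-\epsilon}\,dx<\infty$ is obtained from fractional differentiability of order just under $(1-\epsilon)/2$ of $\widehat\phi$ together with $\sum|g_{m_k}|^2=1$; and $\int|\phi(x)|(1+|x|)^\gamma dx<\infty$ will follow from requiring $\widehat\phi\in W^{r,1}$ (or an appropriate Bessel-potential space) for some $r>1/2+\gamma$, which is achieved by taking $\rho\in C^\infty$ and controlling the sum of derivatives using the sparsity $\beta$ of the supports; the exact admissible $r$ comes out to a function of $\beta$, and the key inequality relating $r,\beta,q,\delta,\gamma$ is precisely the hypothesis $1+\delta-q/2<1/(2\gamma)$.

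The main obstacle is the last paragraph: constructing a smooth family $\{g_{m_k}\}$ satisfying $\sum|g_{m_k}|^2=1$ a.e.\ on $(0,1)$ with all $g_{m_k}$ vanishing at the endpoints and with $\|g_{m_k}\|_\infty\asymp(1+|m_k|)^{-1/2}$, and then verifying the two spatial integrals simultaneously with the frequency one. The tension is that larger $\gamma$ forces smoother (hence broader) bumps, while larger $\delta$ forces sparser (hence more widely spaced) bumps; the inequality $1+\delta-q/2<1/(2\gamma)$ is precisely the regime where a consistent choice of the parameters $\beta$ and $r$ is possible, and the heart of the proof is to carry out this balancing explicitly.
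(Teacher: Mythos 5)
Your frequency-side framework (support $\bigcup_{m}[nm,nm+1]$, the fiberwise identity $\sum_m|g_m(\xi)|^2=1$, endpoint vanishing, amplitudes $\asymp(1+|m|)^{-1/2}$) has the right general shape, but the step you flag as ``delicate bookkeeping'' is not merely delicate: as specified it is impossible. You require (i) each $g_{m_k}$ continuous with $g_{m_k}(0)=g_{m_k}(1)=0$, (ii) $\sum_k|g_{m_k}(\xi)|^2=1$ for a.e.\ $\xi\in(0,1)$, and (iii) $\|g_{m_k}\|_\infty\asymp(1+|m_k|)^{-1/2}\asymp k^{-\beta/2}$ with $\beta>1$, so that $\sum_k\|g_{m_k}\|_\infty^2<\infty$. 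Choose $K$ with $\sum_{k\ge K}\|g_{m_k}\|_\infty^2<1/2$; then (ii) forces $\sum_{k<K}|g_{m_k}(\xi)|^2>1/2$ for a.e.\ $\xi$, yet this finite sum of continuous functions vanishing at $0$ tends to $0$ as $\xi\to0^{+}$. So a sparse family of single bumps whose sup-norms are square-summable can never satisfy the fiber identity; no amount of bookkeeping fixes this.

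The paper's construction resolves exactly this tension by a mechanism your design lacks: each profile $h_j$ (of width $\asymp 2^{-j\alpha}$, concentrating at the endpoint of the base interval) is placed not once but at $\beta_j=\lceil 2^{j\beta}\rceil$ \emph{consecutive} positions $n(\gamma_j+l)$, $0\le l<\beta_j$, each with weight $\beta_j^{-1/2}$. The block of $\beta_j$ copies restores $|h_j|^2$ in the fiber sum (so $\sum_k|\widehat\phi(\xi+k)|^2=1$ survives), while each individual value of $\widehat\phi$ is only $\beta_j^{-1/2}\asymp(1+|\xi|)^{-1/2}$; note that $\sum_{j,l}\|\beta_j^{-1/2}h_j\|_\infty^2=\sum_j 1=\infty$, which is precisely how the obstruction above is evaded. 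This block structure also changes the analysis downstream: the time-domain estimates (Lemmas \ref{TimeLem} and \ref{TimeLemp}) hinge on the Dirichlet-kernel factor $\sin(\beta_j n\pi x)/\sin(n\pi x)$ produced by summing over the $\beta_j$ consecutive translates, and the frequency estimate (Lemma \ref{FreqLem}) comes from the shrinking widths $2^{-j\alpha}$ of the profiles, not from sparsity of the support. The hypothesis $1+\delta-q/2<1/(2\gamma)$ then emerges from reconciling $\alpha>\beta(1+\delta-q/2)$ with $\beta/2>\gamma\alpha$ (Lemma \ref{TimeLemp} with $p=1$), rather than from a smoothness/sparsity balance. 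Your outline would need to be rebuilt around some such repetition device --- or another way of decoupling the fiber norm of $\widehat\phi$ from its pointwise size --- before the three integral estimates can even be attempted.
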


\begin {remark} ${ }$

 \begin {itemize}

 \item [(i)] Note that the orthonormal generator $\phi=sinc$ for the Paley-Wiener space $PW$ satisfies the first and third localization properties in Theorem \ref {time.tm}. This shows that Theorem \ref {GUP} is optimal.  However, the $sinc$ function does not satisfies the second time localization inequality. In fact no function $\phi$ generating a shift-invariant space $V_2(\phi)$ that is also translation invariant can satisfy the second inequality of  Theorem \ref {time.tm}, as is shown in Theorem \ref {bandlimited.tm}. Thus by relaxing translation invariance to $\nZ$ invariance we are able to get better time localization in the sense of the second localization inequality above. For this however, we needed to trade off some frequency localization by allowing infinite support in frequency.
\smallskip

 \item [(ii)] We do not know what happens for the  case $\epsilon=0.$

 \smallskip

 \item [(iii)] Using Lemmas \ref {TimeLem}, \ref {TimeLemp} and   \ref {FreqLem}, Theorem \ref {time.tm} can be shown to be  valid for other norms and other weights.
  \end {itemize}
\end{remark}

\section{Proofs}
\subsection{Proof of Theorem \ref{bandlimited.tm}}
To prove Theorem \ref{bandlimited.tm}, we  recall a characterization for the  Riesz (orthonormal) basis  property (see e.g., \cite {DDR94}) and for  the translation-invariance property (see \cite{ACHKM10}).

\begin{proposition}  \label{riesz.prop}
Let $\phi\in L^2$. Then
\begin{itemize}
\item[{(i)}] $\{\phi(\cdot-k)|\ k\in {\mathbb Z}\}$ is a Riesz basis for its generating space $V_2(\phi)$ if and only if
$$m\leq\sum_{k\in {\mathbb Z}} |\widehat{\phi}(\xi+k)|^2\leq M \ {\rm for \ almost \ all}\ \xi\in {\mathbb R}$$
where  $m$ and $M$ are positive constants, and
\item[{(ii)}] $\{\phi(\cdot-k)|\ k\in {\mathbb Z}\}$ is an orthonormal  basis for  its generating space $V_2(\phi)$ if and only if
$$\sum_{k\in {\mathbb Z}}|\widehat{\phi}(\xi+k)|^2=1 \ {\rm for \ almost \ all }\ \xi\in {\mathbb R}.$$
\end{itemize}
\end{proposition}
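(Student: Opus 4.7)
The plan is to pass to the Fourier side and reduce both statements to pointwise bounds on the periodization $G(\xi):=\sum_{k\in\mathbb Z}|\widehat\phi(\xi+k)|^2$.

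First I would fix $c=(c(k))_{k\in\mathbb Z}\in\ell^2$ with finite support (for density) and let $f=\sum_k c(k)\phi(\cdot-k)$. Taking Fourier transforms gives $\widehat f(\xi)=\widehat\phi(\xi)\,m_c(\xi)$, where $m_c(\xi):=\sum_k c(k)e^{-2\pi ik\xi}$ is the $1$-periodic function whose Fourier coefficients are the $c(k)$'s. By Plancherel on $\mathbb R$ and Parseval on the torus, $\|c\|_{\ell^2}^2=\|m_c\|_{L^2[0,1]}^2$ and
\[
\|f\|_2^2=\int_{\mathbb R}|\widehat\phi(\xi)|^2|m_c(\xi)|^2\,d\xi
=\int_0^1|m_c(\xi)|^2\,G(\xi)\,d\xi,
\]
where the second equality comes from breaking $\mathbb R$ into the intervals $[k,k+1]$ and using $1$-periodicity of $m_c$. (The interchange of sum and integral is justified by monotone convergence applied to $|\widehat\phi|^2|m_c|^2$.)

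For part~(i), I would then note that as $c$ ranges over finitely supported sequences, $m_c$ ranges over a dense subset of $L^2[0,1]$. Hence the Riesz inequalities \eqref{riesz.def} are equivalent to
\[
A^2\int_0^1|m(\xi)|^2\,d\xi\le\int_0^1|m(\xi)|^2\,G(\xi)\,d\xi\le B^2\int_0^1|m(\xi)|^2\,d\xi
\]
for every $m\in L^2[0,1]$. This in turn is equivalent to $A^2\le G(\xi)\le B^2$ for almost every $\xi$: the ``if'' direction is immediate, and the ``only if'' direction follows by a standard argument plugging in $m=\mathbf 1_E$ for small measurable sets $E\subset[0,1]$ on which $G$ would exceed $B^2$ or fall below $A^2$, contradicting the inequality. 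Part~(ii) is the same argument run with $A=B=1$, giving $G(\xi)=1$ a.e.

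The only mildly delicate point is ensuring the Riesz-basis property (as defined in \eqref{riesz.def} via the synthesis inequalities for all $c\in\ell^2$) really is equivalent to the two-sided bound $m\|m\|_2^2\le\int G|m|^2\le M\|m\|_2^2$ on all of $L^2[0,1]$; I would handle this by the density of finitely supported sequences in $\ell^2$ together with the continuity of both sides of the inequality in $c$ under the norm induced by the middle integral (which is comparable to $\|\cdot\|_{\ell^2}$ once $G$ is known to be essentially bounded above and below). With this in hand, both (i) and (ii) follow from the displayed identity for $\|f\|_2^2$.
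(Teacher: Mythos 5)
The paper does not actually prove this proposition; it is recalled from the literature (the citation is to de Boor--DeVore--Ron \cite{DDR94}), so there is no in-paper argument to compare against. Your periodization approach is exactly the standard proof of this characterization: the identity $\|\sum_k c(k)\phi(\cdot-k)\|_2^2=\int_0^1|m_c(\xi)|^2G(\xi)\,d\xi$ with $G(\xi)=\sum_k|\widehat\phi(\xi+k)|^2$ and $\|c\|_{\ell^2}^2=\|m_c\|_{L^2[0,1]}^2$ is correct, the interchange of sum and integral is justified as you say, and part (ii) does reduce to part (i) with $A=B=1$ via polarization.

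There is, however, one genuinely circular step in the necessity direction, and you half-acknowledge it without resolving it. From the Riesz inequalities you only get $A^2\|m\|_2^2\le\int_0^1|m|^2G\le B^2\|m\|_2^2$ for \emph{trigonometric polynomials} $m$, and a priori $G$ is only known to lie in $L^1[0,1]$ (since $\int_0^1 G=\|\phi\|_2^2$). You cannot pass to $m=\mathbf 1_E$ by $L^2$-density, because $m\mapsto\int|m|^2G$ is not $L^2$-continuous unless $G$ is essentially bounded --- which is what you are trying to prove; your parenthetical ``once $G$ is known to be essentially bounded above and below'' assumes the conclusion. The fix is to establish the upper bound first, where no continuity is needed: either convolve against Fej\'er kernels, $\int_0^1 K_N(\xi-\xi_0)G(\xi)\,d\xi\le B^2$ and $K_N*G\to G$ a.e.\ for $G\in L^1$; or take trigonometric polynomials $p_j\to\mathbf 1_E$ in $L^2$ and a.e.\ and apply Fatou to get $\int_E G\le\liminf\int|p_j|^2G\le B^2|E|$. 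Once $G\le B^2$ a.e.\ is in hand, $G$ is bounded, the quadratic form does extend continuously to all of $L^2[0,1]$, and plugging in $\mathbf 1_E$ legitimately yields the lower bound $A^2\le G$ a.e. With the two bounds established in that order, the rest of your argument (including the sufficiency direction, where boundedness of $G$ is given) goes through.
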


For shift-invariant spaces that are also translation invariant, the following proposition is a special case of a general result in \cite{ACHKM10}.
\begin{proposition}\label{sis.prop}  Let   $\phi\in L^2$ with the property that
 $\{\phi(\cdot-k)|\ k\in {\mathbb Z}\}$ is a Riesz basis for its generating space $V_2(\phi)$. Then $V_2(\phi)$ is
translation-invariant if and only if for almost all $\xi\in\R$,
$$\widehat \phi(\xi)\widehat \phi(\xi+k)=0 \ {\rm for\ all} \ 0\ne k\in {\mathbb Z}.$$
\end{proposition}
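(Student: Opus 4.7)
The approach is Fourier-analytic. The Riesz basis hypothesis of Proposition~\ref{riesz.prop}(i) lets one parameterize $V_2(\phi)$ by identifying each $f=\sum_k c(k)\phi(\cdot-k)$ with the $1$-periodic function $C_f(\xi):=\sum_k c(k)e^{-2\pi ik\xi}\in L^2([0,1))$ via $\widehat f=C_f\widehat\phi$; equivalently, $f\in V_2(\phi)$ iff $\widehat f=C\widehat\phi$ for some $1$-periodic $C$ whose restriction to $[0,1)$ is square integrable. The plan is to translate both directions of the equivalence into statements about the essential support $E:=\{\widehat\phi\neq 0\}$ and the action of integer shifts on it.

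For the forward direction $(\Rightarrow)$, I would test translation invariance on $\phi$ itself: for each $t\in\mathbb{R}$ the inclusion $\phi(\cdot-t)\in V_2(\phi)$ yields a $1$-periodic $C_t\in L^2([0,1))$ with $e^{-2\pi it\xi}\widehat\phi(\xi)=C_t(\xi)\widehat\phi(\xi)$, and dividing through by $\widehat\phi$ on $E$ gives $C_t(\xi)=e^{-2\pi it\xi}$ for almost every $\xi\in E$. If the set $\{\xi\in E:\xi+k\in E\}$ had positive measure for some nonzero $k\in\mathbb{Z}$, the $1$-periodicity of $C_t$ would force $e^{-2\pi it\xi}=C_t(\xi)=C_t(\xi+k)=e^{-2\pi it(\xi+k)}$, i.e., $e^{-2\pi itk}=1$ for every $t\in\mathbb{R}$, which is impossible. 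Hence $\widehat\phi(\xi)\widehat\phi(\xi+k)=0$ a.e.\ for every nonzero $k\in\mathbb{Z}$.

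For the converse $(\Leftarrow)$, I would assume the disjointness condition and fix $f\in V_2(\phi)$ and $t\in\mathbb{R}$; the goal is to rewrite $\widehat{f(\cdot-t)}(\xi)=e^{-2\pi it\xi}C_f(\xi)\widehat\phi(\xi)$ in the required form $D(\xi)\widehat\phi(\xi)$ for some $1$-periodic $D\in L^2([0,1))$. By the disjointness, each coset $\xi+\mathbb{Z}$ meets $E$ in at most one point (modulo null sets), so I can unambiguously set $D(\xi):=e^{-2\pi it\xi_0}C_f(\xi_0)$ when the unique representative $\xi_0\in E$ of $\xi+\mathbb{Z}$ exists, and $D(\xi):=0$ otherwise. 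By construction $D$ is $1$-periodic, it satisfies $D(\xi)=e^{-2\pi it\xi}C_f(\xi)$ on $E$ (so $D\widehat\phi=\widehat{f(\cdot-t)}$ a.e.), and it obeys $|D(\xi)|\leq|C_f(\xi)|$ on a fundamental domain by $1$-periodicity of $|C_f|$, placing $D$ in $L^2([0,1))$. The main obstacle is the bookkeeping in this converse, namely verifying that the cosetwise-defined $D$ is measurable and square integrable, both of which rest squarely on the disjointness hypothesis; everything else is direct manipulation of the Fourier-side parameterization.
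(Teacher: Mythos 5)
Your argument is correct. Note, however, that the paper does not prove Proposition~\ref{sis.prop} at all: it is quoted as a special case of the general characterization of extra invariance in \cite{ACHKM10} (where the result is obtained via the fiberization/range-function machinery for shift-invariant spaces). Your blind proof is therefore a genuinely different, and more elementary, route: you use only the standard parameterization $\widehat f = C_f\,\widehat\phi$ with $C_f$ $1$-periodic in $L^2([0,1))$, which is legitimate here precisely because the Riesz basis hypothesis makes the coefficient map an isomorphism and the bound $\sum_k|\widehat\phi(\xi+k)|^2\le M$ lets you pass multiplication by periodic functions through the fibers. The forward direction (testing invariance on $\phi$ itself and extracting $e^{-2\pi itk}=1$ for all $t$ from a positive-measure overlap of $E$ and $E-k$) and the converse (the cosetwise definition of $D$, measurable because it is a sum of measurable pieces with pairwise disjoint supports modulo null sets, and square integrable because $|D|\le|C_f|$ on a fundamental domain) are both sound; the only point worth making explicit is that the a.e.\ disjointness holds simultaneously for all $k\ne 0$ after removing a single null set, which is what makes the representative $\xi_0$ well defined. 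What the citation-based route buys is generality (it covers $\frac1n\mathbb{Z}$-invariance and finitely generated spaces uniformly, cf.\ Proposition~\ref{sis.prop2}); what your route buys is a short, self-contained proof of exactly the statement needed.
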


Now we start to prove Theorem \ref{bandlimited.tm}.

\begin{proof}[Proof of Theorem \ref{bandlimited.tm}]
Suppose on the contrary that there exists a principal shift-invariant space $V_2(\phi)$ on the real line
such that   $V_2(\phi)$ is translation-invariant and the generator $\phi$ is integrable.
Let $${\mathcal O}:=\big\{\xi\in {\mathbb R} |\ \widehat \phi(\xi)\ne 0\big\}.$$ Since $\phi\in L^1$ by assumption, $\widehat \phi$ is continuous, and hence ${\mathcal O}$ is an open set. From  Proposition \ref{sis.prop} it follows that
the Lebesgue measure  of  the set $({\mathcal O}+j)\cap ({\mathcal O}+k)$ is zero for all $j\ne k\in {\mathbb Z}$.
This together with the fact that ${\mathcal O}$ is an open set gives that
\begin{equation}\label{bandlimited.tm.pf.eq1} ({\mathcal O}+j)\cap ({\mathcal O}+k)=\emptyset \quad {\rm for \ all} \  j\ne k\in {\mathbb Z}.
\end{equation}
Recall that $\R$ is connected and that any connected set is not a union of nonempty disjoint open sets. Thus $ \{{\mathcal O}+k| k\in {\mathbb Z}\}$ is not an open covering of the real line, i.e.,  ${\mathbb R}\backslash (\cup_{k\in \mathbb Z} ({\mathcal O}+k))\ne \emptyset$, which in turn implies the existence of a real number $\xi_0\in {\mathbb R}$ with the property that
\begin{equation} \label{bandlimited.tm.pf.eq2}\widehat \phi(\xi_0+k)=0\quad  {\rm for \ all} \ k\in {\mathbb Z}.\end{equation}
As $\widehat \phi$ is uniformly continuous by  the assumption that $\phi\in L^1$, for any $\epsilon>0$ there exists $\delta>0$ such that
\begin{equation}\label{bandlimited.tm.pf.eq3}
|\widehat \phi(\xi+k)-\widehat \phi(\xi_0+k)|<\epsilon \quad {\rm for \ all} \ |\xi-\xi_0|< \delta \ {\rm and} \ k\in {\mathbb Z}.
\end{equation}
By \eqref{bandlimited.tm.pf.eq1}, for any $\xi\in {\mathbb R}$ there exists an integer $l(\xi)$ such that
 \begin{equation}\label{bandlimited.tm.pf.eq4} \sum_{k\in {\mathbb Z}}|\widehat{\phi}(\xi+k)|^2=|\widehat\phi (\xi+l(\xi))|^2.\end{equation}
  Combining  \eqref{bandlimited.tm.pf.eq2}, \eqref{bandlimited.tm.pf.eq3} and \eqref{bandlimited.tm.pf.eq4}  yields
 \begin{equation}
 \sum_{k\in {\mathbb Z}}|\widehat{\phi}(\xi+k)|^2< \epsilon^2\quad  {\rm whenever} \ |\xi-\xi_0|< \delta.
 \end{equation}
Since  $\epsilon>0$ can be   chosen to be arbitrarily small,
the last inequality contradicts the Riesz basis property that there exists $m>0$ such that $m\leq\sum_{k\in {\mathbb Z}} |\widehat{\phi}(\xi+k)|^2$ for almost all $\xi\in\R$.
\end{proof}

\bigskip
\subsection {Proof of Theorem \ref {GUP}}
We need a characterization of $\nZ$-invariance, which is a special case of a more general result in \cite{ACHKM10}.
\begin{proposition}\label{sis.prop2} {\rm (\cite{ACHKM10})}\ Let $n\ge 2$ be an integer, and  $\phi\in L^2$ with the property that
 $\{\phi(\cdot-k)|\ k\in {\mathbb Z}\}$ is  a Riesz basis for its generating space $V_2(\phi)$. Then $V_2(\phi)$ is
$\nZ$-invariant if and only if   for  almost all $\xi\in {\mathbb R}$, one and only one of the following
vectors
\begin{equation}\label{sis.prop2.eq1}
\Phi_m(\xi):=(\cdots, \widehat \phi(\xi+m-n), \widehat \phi(\xi+m), \widehat \phi(\xi+m+n), \cdots ),\quad  0\le m\le n-1,\end{equation} is nonzero.
\end{proposition}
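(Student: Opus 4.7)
The plan is to pass to the Fourier side and exploit the standard bracket-based parametrization of $V_2(\phi)$. Under the Riesz basis assumption, Proposition~\ref{riesz.prop}(i) gives the identification $V_2(\phi) = \{f \in L^2 : \widehat f = m\,\widehat\phi \text{ for some } 1\text{-periodic } m \in L^2([0,1])\}$, and translation by $1/n$ corresponds on the Fourier side to multiplication by $e^{-2\pi i \xi/n}$. Consequently, $V_2(\phi)$ is $\nZ$-invariant if and only if there exists a $1$-periodic function $m_1$ with
$$m_1(\xi)\,\widehat\phi(\xi) = e^{-2\pi i \xi/n}\,\widehat\phi(\xi)\quad\text{a.e.}$$
Necessity comes from testing the defining condition on $m \equiv 1$; for sufficiency, setting $m' = m\cdot m_1$ handles every $m$, since $|m_1|\le 1$ so $m \cdot m_1 \in L^2([0,1])$.

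I would then extract the combinatorial content of this requirement. On the set $\{\widehat\phi\ne 0\}$, $m_1$ is forced to equal $e^{-2\pi i \xi/n}$, so the $1$-periodicity $m_1(\xi)=m_1(\xi+k)$ demands
$$e^{-2\pi i (\xi+j)/n}=e^{-2\pi i (\xi+k)/n}\quad \text{whenever } \widehat\phi(\xi+j)\widehat\phi(\xi+k)\neq 0,$$
i.e.\ $j\equiv k \pmod{n}$. So for almost every $\xi$ the set $\{k\in\Z : \widehat\phi(\xi+k)\ne 0\}$ lies in a single residue class modulo $n$, which is precisely the statement that at most one of the vectors $\Phi_0(\xi),\ldots,\Phi_{n-1}(\xi)$ is nonzero. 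The Riesz basis lower bound $\sum_k|\widehat\phi(\xi+k)|^2\ge A^2>0$ from Proposition~\ref{riesz.prop}(i) excludes the case that all $\Phi_m(\xi)$ vanish, upgrading ``at most one'' to ``exactly one'' and completing the forward direction.

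For the converse, assume that for almost every $\xi$ a unique residue class $m(\xi)\in\{0,\ldots,n-1\}$ records the indices of all nonzero samples $\widehat\phi(\xi+k)$. I would define $m_1$ on $[0,1)$ by $m_1(\xi):=e^{-2\pi i(\xi+m(\xi))/n}$ on the set where such a class exists and $m_1(\xi):=0$ elsewhere, then extend by $1$-periodicity. The ambiguity in choosing a representative within $m(\xi)+n\Z$ is killed by $e^{-2\pi i n\ell/n}=1$, so $m_1$ is well defined and bounded by $1$, and the identity $m_1(\eta)\widehat\phi(\eta)=e^{-2\pi i\eta/n}\widehat\phi(\eta)$ holds by construction. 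This produces $T_{1/n}V_2(\phi)\subseteq V_2(\phi)$; combined with automatic $\Z$-invariance, $V_2(\phi)$ is $\nZ$-invariant.

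The main delicate point is verifying measurability of the choice $\xi\mapsto m(\xi)$: one needs the sets $E_m:=\{\xi\in[0,1) : \widehat\phi(\xi+m+n\ell)\ne 0 \text{ for some }\ell\in\Z\}$ to be measurable for each $m\in\{0,\ldots,n-1\}$, which is immediate from $\widehat\phi\in L^2$ and the countable union structure. Everything else is elementary Fourier-side bookkeeping, so this bookkeeping is the only place where care is needed.
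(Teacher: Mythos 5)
Your argument is correct and complete. Note that the paper itself does not prove this proposition: it is quoted as a special case of a general characterization in \cite{ACHKM10}, so there is no in-paper proof to compare against. Your route --- identify $V_2(\phi)$ on the Fourier side as $\{m\,\widehat\phi : m\in L^2(\mathbb{T})\}$ via the Riesz bounds, reduce $\nZ$-invariance to the existence of a $1$-periodic multiplier $m_1$ with $m_1\widehat\phi=e^{-2\pi i\xi/n}\widehat\phi$, and read off from $1$-periodicity that the fiber support $\{k:\widehat\phi(\xi+k)\neq 0\}$ must sit in a single residue class mod $n$ --- is exactly the fiberization/range-function argument used in \cite{ACHKM10}, and the lower Riesz bound correctly upgrades ``at most one'' to ``exactly one.'' The only point I would tighten is the sentence ``since $|m_1|\le 1$'' in your first paragraph: an arbitrary $m_1\in L^2(\mathbb{T})$ satisfying the identity need not be bounded (it is unconstrained off the periodized support of $\widehat\phi$), so the ``$\Leftarrow$'' half of your preliminary equivalence is not automatic as stated. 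This causes no harm, because in the only place you use that half --- the converse of the proposition --- you construct $m_1$ explicitly as a unimodular-or-zero function; alternatively, remark that any admissible $m_1$ may be replaced by such a normalized one. Your measurability check for $\xi\mapsto m(\xi)$ is the right thing to verify and is handled correctly.
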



\begin{proof} [Proof of Theorem \ref{GUP}]

Suppose on the contrary that
\begin{equation}\label{time.tm.pf.eq1}\int_{\mathbb R} |\phi(x)|^2  (1+ |x|)^{1+\epsilon}dx<\infty.\end{equation} Then $\phi\in L^1$, which implies that $\widehat \phi$ is a uniformly continuous function.
Let
 ${\mathcal O}_m=\{\xi\in {\mathbb R}|\ \Phi_m(\xi)\ne 0\}, 0\le m\le n-1$, where $\Phi_m$ is defined as in \eqref{sis.prop2.eq1}. Since
 $${\mathcal O}_m=\bigcup_{k\in\Z}\{\xi\in {\mathbb R}|\ \widehat{\phi}(\xi+m+kn)\ne 0\},$$
then
${\mathcal O}_m, 0\le m\le n-1$ are open sets, and
\begin{equation}\label{time.tm.pf.eq2}
{\mathcal O}_m+m={\mathcal O}_0 \ {\rm and} \ {\mathcal O}_m+nk={\mathcal O}_m \quad {\rm for \ all}\
 0\le m\le n-1 \ {\rm and} \ k\in {\mathbb Z}.
\end{equation}
Moreover, the intersection between  the sets ${\mathcal O}_m$  with different $m$ have zero Lebesgue measure (hence are  empty sets) by Proposition \ref{sis.prop2}.
Therefore $\{{\mathcal O}_m| 0\le m\le n-1\}$ is not an open covering of the real line ${\mathbb R}$, which implies that
the existence of a real number $\xi_0\in {\mathbb R}$ with the property that
\begin{equation}\label{time.tm.pf.eq3}
\widehat \phi(\xi_0+k)=0\quad {\rm for \ all}\  k\in {\mathbb Z}.
\end{equation}

Let $N\ge 1$ be a sufficiently large integer, $\delta=N^{-1-\epsilon/2}$, and $h$ be a smooth function supported on $[-2, 2]$ and satisfy $0\le h\le 1$, and $h(x)=1$  when $x\in [-1,1]$.
Define $\phi_N(x)= h(x/N) \phi(x)$.
Then we obtain that
\begin{eqnarray} \label{time.tm.pf.eq4}
& & \Big(\frac{1}{2\delta}\int_{-\delta}^\delta \sum_{k\in \mathbb Z} |(\widehat \phi-\widehat \phi_N)(\xi_0+\xi+k)|^2 d\xi\Big)^{1/2}\nonumber\\
& \le & \Big(\frac{1}{2\delta}\int_{\mathbb R} |(\widehat \phi-\widehat \phi_N)(\xi)|^2 d\xi\Big)^{1/2}=
\Big(\frac{1}{2\delta}\int_{\mathbb R} |\phi(x)-\phi_N(x)|^2 dx\Big)^{1/2}\nonumber\\
& \le &
 N^{-\epsilon/4} \Big(\int_{\mathbb R} |\phi(x)|^2 (1+|x|)^{1+\epsilon} dx\Big)^{1/2},
\end{eqnarray}
and
\begin{eqnarray}\label{time.tm.pf.eq5}
& & \Big(\frac{1}{2\delta}\int_{-\delta}^\delta \sum_{k\in \mathbb Z} |\widehat \phi_N(\xi_0+\xi+k)-\widehat \phi_N(\xi_0+k)|^2 d\xi\Big)^{1/2}\nonumber\\
&= &  \Big(\frac{1}{2\delta}\int_{-\delta}^\delta \sum_{k\in \mathbb Z}
 \Big|\int_0^\xi {\widehat \phi^\prime_N} (\xi_0+\xi'+k)d\xi'\Big|^2 d\xi\Big)^{1/2}\nonumber\\
 &\le&\Big(\frac{1}{2\delta}\int_{-\delta}^\delta \xi
 \int_0^\xi  \sum_{k\in \mathbb Z}\Big|\widehat \phi_N^\prime (\xi_0+\xi'+k)\Big|^2d\xi' d\xi\Big)^{1/2}\nonumber\\
 &\le&\Big(\frac{1}{2\delta}\int_{-\delta}^\delta \xi
 \int_0^\xi  \sum_{k\in \mathbb Z}\Big|\int_{\mathbb R} N^2|{\widehat h}^\prime(N\eta)| |\widehat \phi(\xi_0+\xi'+k-\eta)|d\eta \Big|^2d\xi' d\xi\Big)^{1/2}\nonumber\\
& \le   &  \Big(\frac{N^3}{2\delta}\|{\widehat h}^\prime\|_1 \int_{-\delta}^\delta \xi\int_0^\xi\int_{\mathbb R}|{\widehat h}^\prime(N\eta)| \Big(\sum_{k\in \mathbb Z} |\widehat \phi (\xi_0+\xi'+k-\eta)|^2\Big) d\eta d\xi'd\xi\Big)^{1/2}\nonumber\\
 & \le &    N^{-\epsilon/2} \|{\widehat h}^\prime\|_1  \Big(\esssup_{\xi\in {\mathbb R}}\sum_{k\in \mathbb Z} |\widehat \phi (\xi+k)|^2\Big)^{1/2},
\end{eqnarray}
where
$
\widehat \phi_N(\xi) =   N \int_{\mathbb R} {\widehat h}(N\eta) \widehat \phi(\xi-\eta) d\eta$
is used to obtain the second inequality, where the third inequality is obtained by letting $|\widehat h^\prime(N\eta)|=|\widehat h^\prime(N\eta)|^{1/2}|\widehat h^\prime(N\eta)|^{1/2}$ and using H\"older inequality. Also we have that
\begin{eqnarray} \label{time.tm.pf.eq6}
\sum_{k\in {\mathbb Z}} |{\widehat \phi}_N(\xi_0+k)|^2 & = &
\sum_{k\in {\mathbb Z}} \Big|\int_{\mathbb R} e^{-2\pi i (\xi_0+k) x}  \phi (x) (1- h(x/N)) dx\Big|^2 \nonumber\\
& \le & \int_0^1 \Big(\sum_{l\in {\mathbb Z}} |\phi(x+l)| |1-h((x+l)/N)|\Big)^2 dx\nonumber \\
& \le & \int_0^1 \Big(\sum_{l\in \mathbb Z} |\phi(x+l)|^2 (1+|x+l|)^{1+\epsilon}\Big) \nonumber\\
 & & \quad \times
\Big(\sum_{l\in \mathbb Z} (1-h((x+l)/N))^2 (1+|x+l|)^{-1-\epsilon}\Big)  dx\nonumber\\
& \le &  2 \Big(\sum_{l=N}^\infty |l|^{-1-\epsilon}\Big) \times \Big(\int_{\mathbb R}|\phi(x)|^2 (1+|x|)^{1+\epsilon} dx\Big),
\end{eqnarray}
where the first equality follows from \eqref{time.tm.pf.eq3}.
Combining \eqref{time.tm.pf.eq4},
\eqref{time.tm.pf.eq5} and \eqref{time.tm.pf.eq6} with Proposition \ref{riesz.prop} gives
\begin{eqnarray}\label{time.tm.pf.eq7}
m&\le  & \essinf_{\xi\in {\mathbb R}} \big(\sum_{k\in \mathbb Z} |\widehat \phi(\xi+k)|^2\big)^{1/2} \nonumber\\
& \le & \Big(\frac{1}{2\delta}\int_{-\delta}^\delta \sum_{k\in \mathbb Z} |\widehat \phi(\xi_0+\xi+k)|^2 d\xi\Big)^{1/2} \nonumber \\
& \le & \Big(\frac{1}{2\delta}\int_{-\delta}^\delta \sum_{k\in \mathbb Z}\big |\widehat \phi_N(\xi_0+\xi+k)-\widehat \phi_N(\xi_0+k)|^2 d\xi\Big)^{1/2}\nonumber\\
& &  +  \Big(\sum_{k\in \mathbb Z} |\widehat \phi_N(\xi_0+k)|^2\Big)^{1/2}+ \Big(\frac{1}{2\delta}\int_{-\delta}^\delta \sum_{k\in \mathbb Z} |(\widehat \phi-\widehat \phi_N)(\xi_0+\xi+k)|^2 d\xi\Big)^{1/2}\nonumber\\
 & \le  &  C N^{-\epsilon/4}\to 0 \ {\rm as} \ N\to \infty,
 \end{eqnarray}
which  is a contradiction.
\end{proof}

\bigskip
\subsection{Proof of Theorem \ref{fGUP}}
\begin {proof}
Note that $\phi \in L^1$ implies that $\widehat \phi$ is uniformly continuous. Now, suppose on the contrary that
\begin{equation}\label{frequency.tm.pf.eq1} |\widehat \phi(\xi)|\le C (1+|\xi|)^{-1/2-\epsilon}
\end{equation}
for some positive constants $C$ and $\epsilon>0$.
 This together with the continuity of the function $\widehat \phi$ implies that
$G_\phi(\xi)=\sum_{k\in {\mathbb Z}}|\widehat \phi(\xi+k)|^2$ is a continuous function.
Therefore there exists a positive constant $m$ such that
\begin{equation}\label{frequency.tm.pf.eq2}
G_\phi(\xi)\ge m \quad  {\rm for \ all} \ \xi\in {\mathbb R}\end{equation}
by Proposition \ref{riesz.prop} and the continuity of the function $G_\phi$.
Using the argument  in the proof of Theorem \ref{GUP}, we can find a real number $\xi_0\in {\mathbb R}$ such that
$\widehat \phi(\xi_0+k)=0$ for all $k\in {\mathbb Z}$, which implies that $G_\phi(\xi_0)=0$. This contradicts  \eqref{frequency.tm.pf.eq2}.
\end {proof}

\subsection{Proof of Theorem \ref{PointwiseFreqLoc}}
To prove Theorems \ref{PointwiseFreqLoc} and  \ref{time.tm},
 we  construct  a family of principal shift-invariant spaces on the real line which are $\nZ$-invariant for a given integer
  $n\ge 2$.
Let $g$ be an infinitely-differentiable function  that satisfies $g(x)=0$ when $x\le 0$, $g(x)=1$ when $x\ge 1$,
 and $(g(x))^2+(g(1-x))^2=1$ when $0\le x\le 1$.
For positive numbers $\alpha, \beta>0$ and a natural number $n\ge 2$, define $\psi_{\alpha, \beta, n}$ with the help of the Fourier transform by
\begin{eqnarray}\label{time.lem.eq1}
\widehat {\psi_{\alpha, \beta, n}}(\xi) & = &  h_0(\xi)+\sum_{j=1}^\infty \sum_{l=0}^{\beta_j-1}  (\beta_j)^{-1/2} h_j(\xi-n (\gamma_j+l))\nonumber\\
& & \quad +\sum_{j=1}^{\infty} \sum_{l=0}^{\beta_{j}-1} (\beta_{j})^{-1/2}
h_j(-\xi-n (\gamma_{j}+l)), 
\end{eqnarray}
where $\beta_j=\lceil2^{j\beta }\rceil$ (the smallest integer larger than or equal to  $2^{j \beta }$),
$\gamma_j=\sum_{k=0}^{j-1} \beta_{k}$, $ g_0(x)= g(x+1) g(-x+1)$, $g_1(x)=g(x+1)g(-2^\alpha x+1)$,
and
 \begin{equation}\label{time.lem.eq2}
 h_j(\xi)=\left\{\begin{array}{ll}  g_0(2\xi/(1-2^{-\alpha})) \ & \ {\rm if} \ j=0,\\
 g_1(2^{j\alpha} (2\xi-1+2^{-j\alpha})/(2^\alpha-1)) \ & \ {\rm if} \ j\ge 1.\end{array}\right.\end{equation}
The functions $\widehat {\psi_{\alpha, \beta, n}}(\xi)$  with  $\alpha=1, \beta=2$ and $n=2$
and $h_i(\xi), 0\le i\le 3$, with $\alpha=1$ are plotted in Figure \ref{figure1}.

\begin{figure}[hbt]
\centering
\begin{tabular}{c}
  \includegraphics[width=80mm]{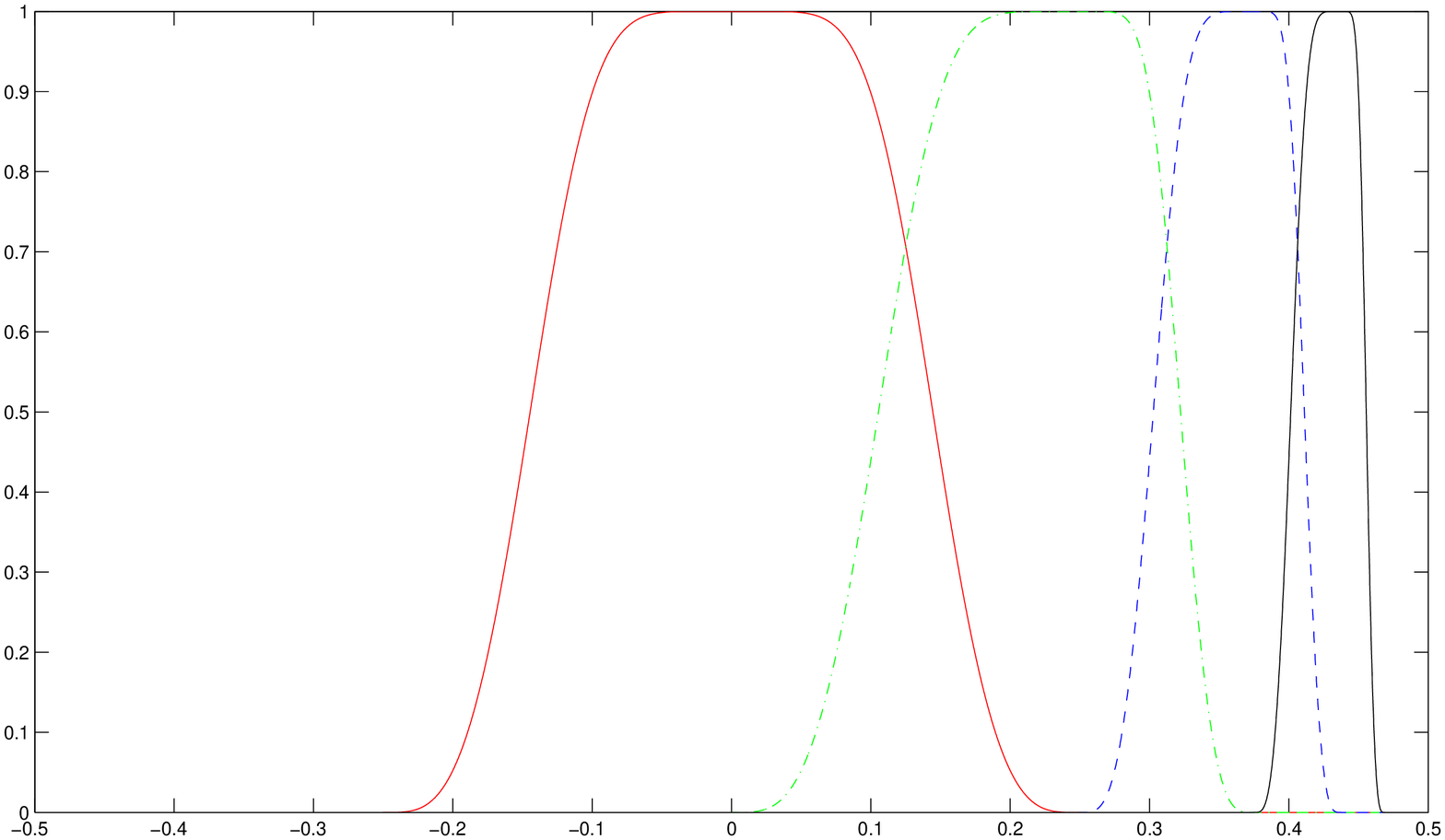} \\
    \includegraphics[width=80mm]{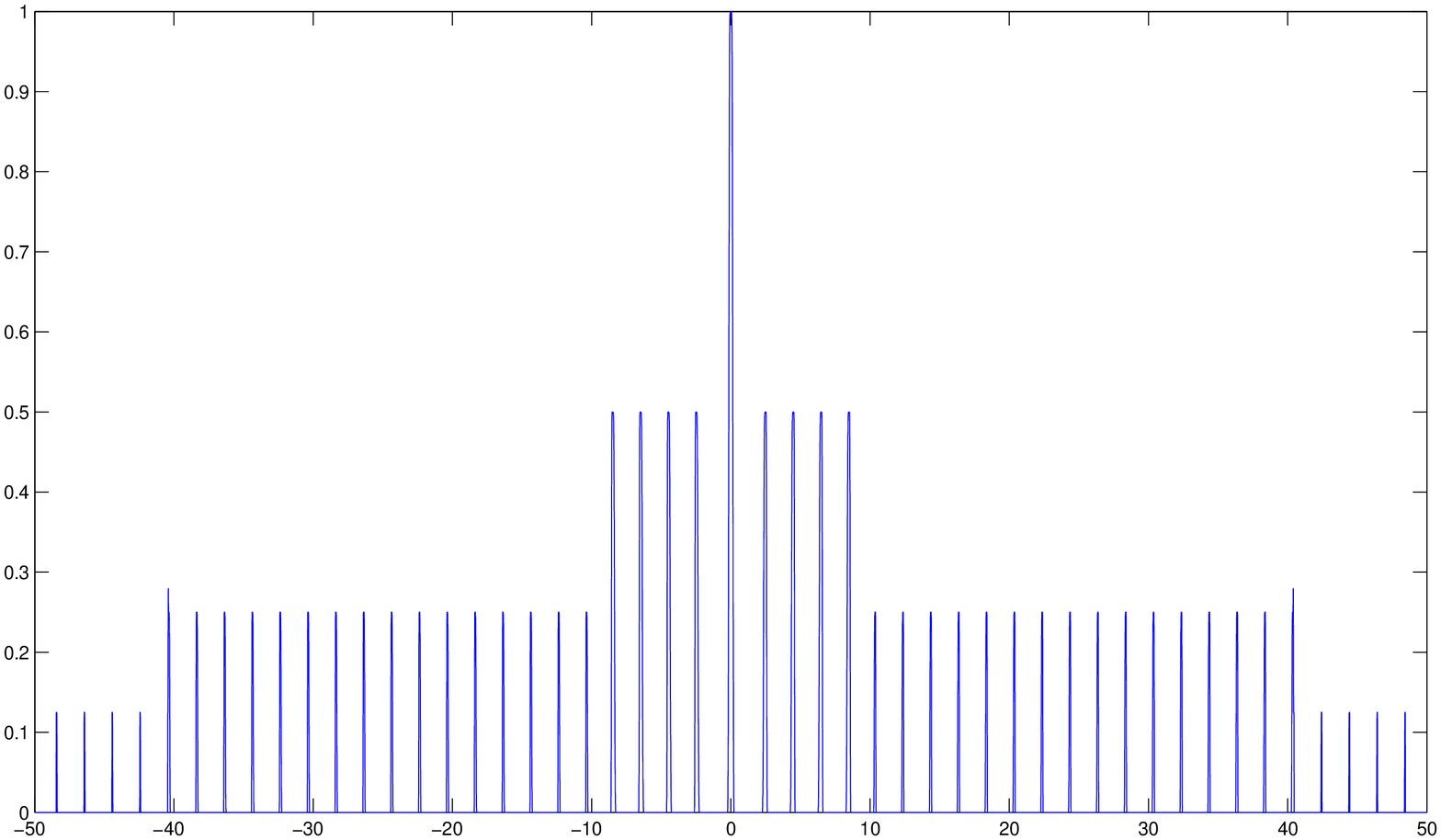}
\end{tabular}
\caption{The functions $h_i, 0\le i\le 3$ with $\alpha=1$ on the top, and the function $\widehat{\psi_{\alpha, \beta,n}}$  with $\alpha=1, \beta=2$ and $n=2$ on the bottom.
}
 \label{figure1}
\end{figure}

\begin{lemma} \label{time.lem}
For $\alpha, \beta>0$ and an integer $n\ge 2$, let $\psi_{\alpha,\beta, n}$ be defined as in \eqref{time.lem.eq1}. Then
$\psi_{\alpha, \beta, n}$ is an orthonormal generator of its generating space $V_2(\psi_{\alpha, \beta, n})$ and the principal shift-invariant space
$V_2(\psi_{\alpha, \beta,n})$ is $\nZ$-invariant.
\end{lemma}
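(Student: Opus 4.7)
My approach is to apply Proposition \ref{riesz.prop}(ii) for the orthonormal basis assertion and Proposition \ref{sis.prop2} for the $\nZ$-invariance assertion. The first step is to pin down the supports of the building blocks. Since $\mathrm{supp}(g) \subset [0,\infty)$ with $g \equiv 1$ on $[1,\infty)$, one has $\mathrm{supp}(g_0) = [-1,1]$ and $\mathrm{supp}(g_1) = [-1, 2^{-\alpha}]$. Unwinding the affine substitutions in \eqref{time.lem.eq2} then gives $\mathrm{supp}(h_0) = [-(1-2^{-\alpha})/2, (1-2^{-\alpha})/2]$ and $\mathrm{supp}(h_j) = I_j := [(1-2^{-(j-1)\alpha})/2, (1-2^{-(j+1)\alpha})/2]$ for $j \geq 1$. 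Each of these intervals has length strictly less than $1/2$, and consecutive ones overlap in $I_j \cap I_{j+1} = [(1-2^{-j\alpha})/2, (1-2^{-(j+1)\alpha})/2]$ (with the convention $I_0 := \mathrm{supp}(h_0)$). The second step is the partition-of-unity identity
\begin{equation*}
|h_j(\xi)|^2 + |h_{j+1}(\xi)|^2 = 1, \quad \xi \in I_j \cap I_{j+1},\ j \geq 0,
\end{equation*}
which reduces, after changing variables, to the defining property $g(t)^2 + g(1-t)^2 = 1$; the scale factors $2^{j\alpha}$ and $(2^\alpha-1)^{-1}$ in \eqref{time.lem.eq2} are chosen precisely so that the arguments of $g$ appearing in $h_j$ and $h_{j+1}$ come out as $1 - 2^\alpha u$ and $2^\alpha u$ for the same local coordinate $u$.

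Because every bump in \eqref{time.lem.eq1} is centered at an integer multiple of $n$ and has width less than $1 \leq n$, the individual bump supports are pairwise disjoint, and consequently $\mathrm{supp}(\widehat{\psi_{\alpha,\beta,n}}) \subset n\mathbb{Z} + (-1/2, 1/2)$. For $\xi \in [0, 1/2)$, the requirement that $\xi + k$ be an integer shift into a support interval forces the only nonzero terms of $\widehat{\psi_{\alpha,\beta,n}}(\xi+k)$ to occur at $k = 0$ (contributing $h_0(\xi)$) and at $k = n(\gamma_j + l)$ with $l = 0,\dots,\beta_j - 1$ whenever $\xi \in I_j$ (each contributing $(\beta_j)^{-1/2} h_j(\xi)$). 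Summing squared magnitudes and using the partition-of-unity identity yields
\begin{equation*}
\sum_{k \in \mathbb{Z}} |\widehat{\psi_{\alpha,\beta,n}}(\xi+k)|^2 = \beta_j(\beta_j)^{-1} |h_j(\xi)|^2 + \beta_{j+1}(\beta_{j+1})^{-1} |h_{j+1}(\xi)|^2 = 1
\end{equation*}
for $\xi \in I_j \cap I_{j+1}$. The range $\xi \in [1/2, 1)$ is handled symmetrically via the reflected bumps: they contribute at $k = -n(\gamma_j + l) - 1$, each evaluating to $(\beta_j)^{-1/2} h_j(1-\xi)$, and the same accounting yields the sum $1$. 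Proposition \ref{riesz.prop}(ii) then gives the orthonormal basis property.

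For the $\nZ$-invariance, the containment $\mathrm{supp}(\widehat{\psi_{\alpha,\beta,n}}) \subset n\mathbb{Z} + (-1/2, 1/2)$ established above immediately yields the hypothesis of Proposition \ref{sis.prop2}: for a.e.\ $\xi \in [0,1)$, the integers $k$ with $\widehat{\psi_{\alpha,\beta,n}}(\xi + k) \neq 0$ lie in a single residue class modulo $n$, namely $0$ if $\xi \in [0, 1/2)$ and $n-1$ if $\xi \in (1/2, 1)$. Thus exactly one of the vectors $\Phi_m(\xi)$ in \eqref{sis.prop2.eq1} is nonzero, so $V_2(\psi_{\alpha,\beta,n})$ is $\nZ$-invariant. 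I expect the main obstacle to be the algebraic bookkeeping in the second step — verifying that the nested $2^{j\alpha}$-scalings in $h_j$ and $h_{j+1}$ realign, on the overlap, to give complementary arguments of $g$ — after which the orthonormality reduces to a counting argument and the $\nZ$-invariance becomes essentially automatic from the support structure.
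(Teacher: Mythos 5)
Your argument is correct and follows essentially the same route as the paper: identify the supports of the $h_j$, use the complementary-argument identity $g(t)^2+g(1-t)^2=1$ on the overlaps $I_j\cap I_{j+1}$ (where, as you note, the arguments realign to $2^\alpha u$ and $1-2^\alpha u$) to conclude $\sum_{k\in\mathbb Z}|\widehat{\psi_{\alpha,\beta,n}}(\xi+k)|^2=1$ a.e.\ and invoke Proposition \ref{riesz.prop}(ii), then read off the $\nZ$-invariance from $\mathrm{supp}(\widehat{\psi_{\alpha,\beta,n}})\subset n\mathbb Z+(-1/2,1/2)$ via Proposition \ref{sis.prop2}. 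The only slip is the claim that every support interval has length strictly less than $1/2$ (the support of $h_0$ has length $1-2^{-\alpha}$), but all your argument actually uses is containment in $(-1/2,1/2)$, which holds.
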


\begin{proof}
As each $h_j$, for $j\ge 0$, is supported in $(-1/2, 1/2)$ by  construction,
\begin{eqnarray*}
|\widehat {\psi_{\alpha, \beta, n}}(\xi)|^2 & = &  |h_0(\xi)|^2+\sum_{j=1}^\infty \sum_{l=0}^{\beta_j-1}  (\beta_j)^{-1} |h_j(\xi-n (\gamma_j+l))|^2\nonumber\\
& & \quad +\sum_{j=1}^{\infty} \sum_{l=0}^{\beta_{j}-1} (\beta_{j})^{-1}
|h_j(-\xi-n (\gamma_{j}+l))|^2,
\end{eqnarray*}
which implies that
\begin{eqnarray}\label{time.lem.pf.eq1}
\sum_{k\in \mathbb Z} |\widehat {\psi_{\alpha, \beta, n}}(\xi+k)|^2 & = &
\sum_{k\in \mathbb Z} |h_0(\xi+k)|^2+\sum_{j=1}^\infty
 \sum_{k\in \mathbb Z} \big( |h_j(\xi+k)|^2+ |h_j(-\xi+k)|^2\big)\nonumber\\
 & = & |h_0(\xi)|^2+\sum_{j=1}^\infty
 |h_j(\xi)|^2+\sum_{j=1}^\infty
 |h_j(-\xi)|^2
  \end{eqnarray}
 for any $\xi\in (-1/2, 1/2)$.
Set
$$H(\xi):=|h_0(\xi)|^2+\sum_{j=1}^\infty
 |h_j(\xi)|^2+\sum_{j=1}^\infty
 |h_j(-\xi)|^2.$$
 Then $H(\xi)$ is a symmetric function supported on $(-1/2, 1/2)$ and
 for any
 $\xi\in [1-2^{-j\alpha},  1-2^{-(j+1)\alpha}]/2$ with $ j\ge 0$,
  \begin{eqnarray}
 H(\xi) & = &  |h_{j}(\xi)|^2+ |h_{j+1}(\xi)|^2\nonumber\\
  & = & | g(- 2^{(j+1)\alpha} (2\xi-1+2^{-j\alpha})/(2^\alpha-1)+1 )|^2\nonumber\\
 & &   +
 | g(2^{(j+1)\alpha} (2\xi-1+2^{-(j+1)\alpha})/(2^\alpha-1)+1 )|^2\nonumber\\
& = & 1 \end{eqnarray}
by the construction of the functions $g$ and $h_j, j\ge 0$.
Therefore $H(\xi)=1$ for all $\xi\in (-1/2, 1/2)$, which together with \eqref{time.lem.pf.eq1} implies that
\begin{equation}
\label{time.lem.pf.eq2}
\sum_{k\in \mathbb Z} |\widehat {\psi_{\alpha, \beta, n}}(\xi+k)|^2=1\quad {\rm for\ all}\ \xi\in {\mathbb R}\backslash (1/2+{\mathbb Z}).
\end{equation}
Then   $\psi_{\alpha, \beta,n}$ is an orthonormal generator for its generating space $V_2(\psi_{\alpha, \beta, n})$
by \eqref{time.lem.pf.eq2} and Proposition \ref{riesz.prop}.

By \eqref{time.lem.eq1}, $\widehat {\psi_{\alpha, \beta, n}}$ is supported on $(-1/2, 1/2)+n{\mathbb Z}$.
Then $V_2(\psi_{\alpha, \beta,n})$ is $\nZ$-invariant by \eqref{time.lem.pf.eq2} and Proposition \ref{sis.prop2}.
\end{proof}
 We are now ready to prove Theorem \ref {PointwiseFreqLoc}.

 \begin {proof} [Proof of Theorem \ref {PointwiseFreqLoc}]
Let $\psi_{\alpha, \beta, n}$ be as in \eqref{time.lem.eq1} for $\alpha,\beta>0$, and set $\phi=\psi_{\alpha, \beta, n}$. Then by Lemma \ref{time.lem} it suffices to prove \eqref{frequency.tm.eq2} for the function $\phi$ just defined.
From \eqref{time.lem.eq1} it follows that
\begin{eqnarray}
 & & |\widehat \phi(\xi)| |\xi|^{1/2}  =  |\widehat {\psi_{\alpha, \beta, n}}(\xi)| |\xi|^{1/2}\nonumber\\
 & \le &
\sup \Big\{ |h_0(\xi)| |\xi|^{1/2}, \sup_{j\ge 1, 0\le l\le \beta_j-1}
\beta_{j}^{-1/2} |h_j(\xi-n(\gamma_j+l))| |\xi|^{1/2},\nonumber\\
& &
\sup_{j\ge 1, 0\le l\le \beta_{j}-1}
\beta_{j}^{-1/2} |h_{j}(-\xi-n(\gamma_{j}+l))| |\xi|^{1/2}\Big\}\nonumber
\end{eqnarray}
Note that, from its definition, $h_{j}(\xi-n(\gamma_{j}+l))$ has support in $[n(\gamma_{j}+l), n(\gamma_{j}+l)+1]$ and has maximal value $1$. Thus the term  $|h_j(\xi-n(\gamma_j+l))| |\xi|^{1/2}$ can be bounded above by $\big(n(\gamma_j+l)+1\big)^{1/2}$ for all $\xi$ and $0\le l\le \beta_j-1$.  Thus, it follows from the last inequality  and the fact that $\gamma_j+\beta_j=\gamma_{j+1}$ that

\begin{equation} |\widehat \phi(\xi)| |\xi|^{1/2}\le  1+ C \sup_{j\ge 1} (\beta_j)^{-1/2} (\gamma_{j+1})^{1/2}<\infty,
\end {equation}
where $C$ is a positive constant. Hence  \eqref{frequency.tm.eq2} holds. In particular, we can show that
\begin{equation}
0<\limsup_{|\xi|\to \infty} |\widehat \phi(\xi)| |\xi|^{1/2}<\infty.
\end{equation}
This proves the pointwise frequency localization of the theorem.  The time localization inequality is a direct consequence of Lemma \ref {TimeLem} below. The fact that $\phi$ is also in $L^1$ follows from Lemma \ref  {TimeLemp} choosing $p=1$, $\gamma=0$.
 \end {proof}

\subsection{Proof of Theorem \ref{time.tm}}
 Theorem \ref{time.tm} is an immediate consequence of the following three lemmas:
\begin {lemma}
\label {TimeLem}
Let $\epsilon\in (0,1)$, $\alpha, \beta>0$, $n$ be an integer with $n\ge 2$, and $\psi_{\alpha,\beta, n}$ be defined as in \eqref{time.lem.eq1}.
Then
\begin {equation}
\label {TimeLemEq}
\int_{\mathbb R}   |\psi_{\alpha,\beta, n}(x)|^2 |x|^{1-\epsilon}dx<\infty.
\end {equation}
\end{lemma}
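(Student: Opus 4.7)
The plan is to convert the weighted time-domain integral into a fractional Sobolev seminorm on the Fourier side, and then exploit the fact that $\widehat{\psi_{\alpha,\beta,n}}$ is a disjoint union of well-separated smooth bumps. Set $s=(1-\epsilon)/2\in(0,\tfrac12)$. Combining Plancherel's theorem with Fourier inversion $\widehat{\widehat{f}}(x)=f(-x)$ yields
$$\int_{\mathbb R}|\psi_{\alpha,\beta,n}(x)|^{2}|x|^{1-\epsilon}\,dx=\int_{\mathbb R}|\omega|^{2s}|\widehat{(\widehat{\psi_{\alpha,\beta,n}})}(\omega)|^{2}\,d\omega=\|\widehat{\psi_{\alpha,\beta,n}}\|_{\dot H^{s}}^{2},$$
and for $s\in(0,1)$ the Slobodeckij characterization identifies the right-hand side, up to a constant $c_{s}$, with $\iint|\widehat{\psi_{\alpha,\beta,n}}(\xi)-\widehat{\psi_{\alpha,\beta,n}}(\eta)|^{2}|\xi-\eta|^{-(2-\epsilon)}\,d\xi\,d\eta$. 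It therefore suffices to bound this double integral.

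I would decompose $\widehat{\psi_{\alpha,\beta,n}}=h_{0}+\sum_{j\ge 1}(\widehat{\psi_{j}^{+}}+\widehat{\psi_{j}^{-}})$, where $\widehat{\psi_{j}^{\pm}}(\xi)=(\beta_{j})^{-1/2}\sum_{l=0}^{\beta_{j}-1}h_{j}(\pm\xi-n(\gamma_{j}+l))$. The explicit supports of the $h_{j}$ worked out in the proof of Lemma \ref{time.lem} show that the supports of these ``level'' summands are pairwise disjoint, with gaps of at least $n-1$ between consecutive levels. In particular, at every $\xi$ at most one of the level summands $\widehat{\psi_{J}}$ is nonzero, so for any pair $(\xi,\eta)$ at most two of them contribute. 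Combining $(a+b)^{2}\le 2(a^{2}+b^{2})$ with the fact that a level inactive at both $\xi$ and $\eta$ contributes zero, one obtains the pointwise almost-orthogonality
$$|\widehat{\psi_{\alpha,\beta,n}}(\xi)-\widehat{\psi_{\alpha,\beta,n}}(\eta)|^{2}\le 2\sum_{J}|\widehat{\psi_{J}}(\xi)-\widehat{\psi_{J}}(\eta)|^{2},$$
reducing the task to bounding $\|\widehat{\psi_{J}}\|_{\dot H^{s}}^{2}$ level by level.

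The heart of the argument is the single-level estimate. For $j\ge 1$ the bump width is $w_{j}\sim 2^{-j\alpha}$, the pointwise amplitude is $(\beta_{j})^{-1/2}$, the derivative bound is $\|h_{j}'\|_{\infty}\sim 2^{j\alpha}$, and the $\beta_{j}$ centers are spaced by $n$. Writing $F_{j}(h)=\int|\widehat{\psi_{j}^{+}}(\xi+h)-\widehat{\psi_{j}^{+}}(\xi)|^{2}\,d\xi$, a case analysis according to whether $\xi$ and $\xi+h$ lie in the same bump, in different bumps, or outside all bumps, yields
$$F_{j}(h)\le C\bigl(2^{j\alpha}h^{2}+h\bigr)\ \text{ for }h\le w_{j},\qquad F_{j}(h)\le C\,w_{j}\ \text{ for }h>w_{j},$$
where the $2^{j\alpha}h^{2}$ term is the Lipschitz contribution inside a single bump and the remaining terms come from the transition region near the boundary of a bump. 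Substituting into $\|\widehat{\psi_{j}^{+}}\|_{\dot H^{s}}^{2}=c_{s}\int_{0}^{\infty}h^{-1-2s}F_{j}(h)\,dh$ and computing each of the three pieces separately gives a constant multiple of $w_{j}^{1-2s}=2^{-j\alpha\epsilon}$; the same bound holds for $\widehat{\psi_{j}^{-}}$. Summing the geometric series $\sum_{j\ge 1}2^{-j\alpha\epsilon}<\infty$ together with the trivial bound $\|h_{0}\|_{\dot H^{s}}<\infty$ (since $h_{0}$ is smooth and compactly supported) then yields \eqref{TimeLemEq}.

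The main obstacle will be the per-level $F_{j}$ bookkeeping: cleanly separating the interior Lipschitz contribution from the boundary contribution, and checking that the possible cross-bump terms within a single level (which activate only for $h$ near positive integer multiples of $n$) remain controlled by the same $w_{j}$-sized bound. Once that is in hand, the summation collapses immediately, and the resulting bound is pleasantly independent of $\beta$ since the weight $(\beta_{j})^{-1/2}$ exactly offsets the bump count $\beta_{j}$ at each level.
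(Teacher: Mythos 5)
Your argument is correct, and it takes a genuinely different route from the paper. The paper works in the time domain: it computes $\psi_{\alpha,\beta,n}$ explicitly as an inverse Fourier transform, so that each level $j$ appears as a dilate of $g_1^\vee$ (which decays like $(1+|x|)^{-2}$) multiplied by a Dirichlet-type kernel $\frac{\sin\beta_j n\pi x}{\sin n\pi x}$ coming from the sum over the $\beta_j$ translates; the weighted $L^2$ norm of each level is then bounded by Minkowski's inequality, periodization, and the elementary estimate $\big|\frac{\sin\beta_j\pi x}{\sin\pi x}\big|\le\min(\beta_j,\frac{1}{2|x|})$, yielding the per-level bound $2^{-j\alpha\epsilon/2}$ for the square root. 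You instead stay entirely on the Fourier side, identifying $\int|\psi|^2|x|^{1-\epsilon}dx$ with the Gagliardo seminorm $\|\widehat{\psi}\|_{\dot H^s}^2$, $s=(1-\epsilon)/2\in(0,\tfrac12)$; your pointwise almost-orthogonality across levels is legitimate because the supports $n(\gamma_j+l)+(-\tfrac12,\tfrac12)$ are pairwise disjoint (the indices $\gamma_j+l$ enumerate the positive integers and $n\ge2$), and your single-level bounds $F_j(h)\lesssim 2^{j\alpha}h^2+h$ for $h\le w_j$ and $F_j(h)\lesssim\|\widehat{\psi_j^{\pm}}\|_2^2\sim w_j$ otherwise integrate (using $0<s<\tfrac12$, i.e.\ $0<\epsilon<1$, exactly as in the hypothesis) to the same geometric tail $w_j^{1-2s}=2^{-j\alpha\epsilon}$. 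Note that the identity $\int_0^\infty h^{-1-2s}F(h)\,dh=c_s\int|\xi|^{2s}|\hat u(\xi)|^2d\xi$ holds in $[0,\infty]$ for every $u\in L^2$ by Plancherel and Tonelli, so no density argument is needed to pass back. Your approach is cleaner for this lemma and makes the independence of the bound from $\beta$ and $n$ transparent; the paper's time-domain computation is less elegant here but has the advantage that essentially the same Dirichlet-kernel estimate also proves the $L^p$ weighted bound of Lemma \ref{TimeLemp} for $p<2$, which a Plancherel-based argument cannot reach.
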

\begin {lemma}
\label {TimeLemp}
Let $\gamma\ge 0$, $1\le p<2,$ $n$ be an integer with $n\ge 2$, and $\psi_{\alpha, \beta, n}$ be defined as in \eqref{time.lem.eq1} for
  positive numbers $\alpha, \beta>0$  with $\beta (1/p-1/2)+\alpha (p-1-\gamma)/p>0.$
Then
\begin {equation}
\label {TimeLempEq}
\int_{\mathbb R}   |\psi_{\alpha,\beta, n}(x)|^p(1+ |x|)^{\gamma}dx<\infty.
\end {equation}
\end{lemma}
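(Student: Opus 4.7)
The plan is to decompose $\widehat{\psi_{\alpha,\beta,n}}$ according to the scale index $j$ in \eqref{time.lem.eq1}, estimate each scale separately in the weighted space $L^p((1+|x|)^\gamma dx)$, and sum geometrically. Concretely, write $\psi_{\alpha,\beta,n}=\psi_0+\sum_{j\ge 1}\psi_j$, where $\psi_j$ is the inverse Fourier transform of the $j$-th scale contribution, namely
\[
\psi_j(x)=(\beta_j)^{-1/2}\bigl[H_j(x)\,S_j(x)+H_j(-x)\,\overline{S_j(x)}\bigr],
\qquad j\ge 1,
\]
with $H_j:=\mathcal{F}^{-1}h_j$ and $S_j(x):=\sum_{l=0}^{\beta_j-1}e^{2\pi i n(\gamma_j+l)x}$. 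By the triangle inequality in $L^p\bigl((1+|x|)^\gamma dx\bigr)$ it suffices to show that $\sum_{j}\|\psi_j\|_{L^p_\gamma}<\infty$.

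Two structural facts drive the estimate. First, since $h_j$ is obtained by rescaling a fixed smooth bump to width $\sim 2^{-j\alpha}$ on the Fourier side, $H_j$ is a modulated/rescaled Schwartz function and satisfies the dilation estimate
\[
|H_j(x)|\le C_N\,2^{-j\alpha}\bigl(1+|x|\,2^{-j\alpha}\bigr)^{-N}\qquad\text{for every }N.
\]
Second, $S_j$ is a Dirichlet-type kernel of length $\beta_j$ with period $1/n$, so
\[
\int_0^{1/n}|S_j(x)|^p\,dx\le C\,\beta_j^{\,p-1}\qquad(1<p\le 2),
\]
with the usual $\log\beta_j$ substitute when $p=1$. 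I would then use $|a+b|^p\le 2^{p-1}(|a|^p+|b|^p)$ to reduce the estimate to
\[
\int|\psi_j|^p(1+|x|)^\gamma\,dx\le C\beta_j^{-p/2}\int\bigl(|H_j(x)|^p+|H_j(-x)|^p\bigr)|S_j(x)|^p(1+|x|)^\gamma\,dx.
\]

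Substituting $y=x\,2^{-j\alpha}$ rescales the problem to the unit scale: the Schwartz tail of $H_j$ effectively cuts the integration to $|x|\lesssim 2^{j\alpha}$, on which $(1+|x|)^\gamma\le C\,2^{j\alpha\gamma}(1+|y|)^\gamma$, while periodicity of $S_j$ contributes a factor of $2^{j\alpha}\beta_j^{\,p-1}$ (averaging over the $\sim 2^{j\alpha}$ periods of $S_j$ contained in a unit interval of $y$, and taking $N$ large enough that the tail sum $\sum_k(1+|k|)^{-Np+\gamma}$ converges). Putting the factors together gives
\[
\|\psi_j\|_{L^p_\gamma}^{\,p}\le C\,\beta_j^{-p/2}\cdot 2^{-j\alpha p}\cdot 2^{j\alpha\gamma}\cdot 2^{j\alpha}\cdot\beta_j^{\,p-1}\;=\;C\cdot 2^{j[\alpha(1+\gamma-p)+\beta(p/2-1)]},
\]
with a harmless extra polynomial factor in $j$ coming from $\log\beta_j$ when $p=1$. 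The hypothesis $\beta(1/p-1/2)+\alpha(p-1-\gamma)/p>0$ is exactly the statement that the bracketed exponent is strictly negative, hence $\|\psi_j\|_{L^p_\gamma}$ decays geometrically in $j$ and the series converges, proving \eqref{TimeLempEq}.

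The main obstacle is not a single deep step but rather the careful bookkeeping of the three competing scales: the spatial width $2^{j\alpha}$ on which $H_j$ lives, the oscillatory structure of $S_j$ on period $1/n$, and the polynomial weight $(1+|x|)^\gamma$; getting the optimal exponent (matching the condition on $\alpha,\beta,\gamma,p$ stated in the lemma) requires using the $L^p$ average of the Dirichlet kernel rather than the crude bound $|S_j|\le \beta_j$, which otherwise would yield a strictly stronger and unnecessary hypothesis on $\alpha,\beta$.
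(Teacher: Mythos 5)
Your proposal is correct and follows essentially the same route as the paper: decompose by the scale index $j$, use the triangle inequality in the weighted $L^p$ space, exploit the rapid decay of the inverse Fourier transform of the rescaled bump $h_j$ together with the $L^p$ bound $\int_0^1|\sin(\beta_j\pi x)/\sin(\pi x)|^p\,dx\lesssim\beta_j^{p-1}$ (with the $\log\beta_j$ substitute at $p=1$), and sum the resulting geometric series, whose common ratio is negative exactly under the stated hypothesis on $\alpha,\beta,\gamma,p$. The exponent $2^{-j(\beta(1/p-1/2)+\alpha(p-1-\gamma)/p)}$ you obtain matches the paper's, including the extra factor of $j$ in the $p=1$ case.
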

\begin{lemma}
\label {FreqLem}
Let $\delta>0$, $1\le q<\infty$, $n$ be an integer with $n\ge 2$, and $\psi_{\alpha, \beta, n}$ be defined as in \eqref{time.lem.eq1} for
  positive numbers $\alpha, \beta>0$   with $\alpha>\beta (1+\delta-q/2)$. Then
\begin {equation}
\label {FreqLemEq}
\int_{\mathbb R}   |\widehat {\psi_{\alpha, \beta, n}}(\xi)|^q (1+|\xi|)^{\delta} d\xi<\infty.
\end {equation}
\end{lemma}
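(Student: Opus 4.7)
My plan is to exploit the essentially disjoint support structure of the summands in \eqref{time.lem.eq1} and reduce the estimate to an elementary size bound on each dyadic block. First I would verify the disjoint-support claim: for $j\ge 1$, the function $h_j$ is supported in the subinterval $[(1-2^{-(j-1)\alpha})/2,\,(1-2^{-(j+1)\alpha})/2]\subset[0,1/2]$, which has length at most $C\,2^{-j\alpha}$. Since $n\ge 2$ and the integer shifts $n(\gamma_j+l)$ are spaced at least $n$ apart, the translates $h_j(\cdot-n(\gamma_j+l))$ for $j\ge 1$ and $0\le l\le \beta_j-1$ have pairwise disjoint supports lying in $[n(\gamma_j+l),\,n(\gamma_j+l)+1/2]$; the reflected terms $h_j(-\xi-n(\gamma_j+l))$ yield analogous disjoint supports on the negative axis; and $h_0$ is supported in $(-1/2,1/2)$, disjoint from all of these.

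With disjointness in hand, I can expand $\int_{\mathbb R}|\widehat{\psi_{\alpha,\beta,n}}(\xi)|^q(1+|\xi|)^\delta d\xi$ as the $j=0$ contribution (a bounded function on a bounded set, hence trivially finite) plus twice $\sum_{j\ge 1}\sum_{l=0}^{\beta_j-1}\beta_j^{-q/2}\int|h_j(\xi-n(\gamma_j+l))|^q(1+|\xi|)^\delta d\xi$. On each shifted support I would apply three uniform estimates: $|h_j|\le 1$, the support length $\lesssim 2^{-j\alpha}$, and, for $\xi$ in that support, $(1+|\xi|)^\delta\lesssim (n(\gamma_j+l)+1)^\delta$.

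Using the asymptotics $\gamma_j\asymp\beta_j\asymp 2^{j\beta}$, the inner sum over $0\le l\le \beta_j-1$ contributes a counting factor $\beta_j\asymp 2^{j\beta}$ and a weight factor $\lesssim 2^{j\beta\delta}$. Combining with the amplitude $\beta_j^{-q/2}\asymp 2^{-j\beta q/2}$ and the support length $2^{-j\alpha}$, the entire block at scale $j$ is controlled by $C\,\beta_j\cdot\beta_j^{-q/2}\cdot 2^{-j\alpha}\cdot 2^{j\beta\delta}\lesssim 2^{j\bigl(\beta(1+\delta-q/2)-\alpha\bigr)}$. Summing over $j\ge 1$ yields a geometric series whose ratio is strictly less than $1$ precisely when $\alpha>\beta(1+\delta-q/2)$, which is the standing hypothesis; the reflected sum is handled identically.

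The main obstacle is just careful bookkeeping: verifying that the positive, negative, and central ($h_0$) pieces have pairwise disjoint supports, and correctly tracking the exponent of $2$ produced by the competition between the four scale-$j$ ingredients (amplitude $\beta_j^{-1/2}$, count $\beta_j$, shrinking support $2^{-j\alpha}$, and growing weight $2^{j\beta\delta}$). The hypothesis on $\alpha,\beta,\delta,q$ is precisely what balances these ingredients to give a convergent geometric series.
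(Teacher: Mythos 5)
Your proof is correct and follows essentially the same route as the paper's: split the integral using the disjoint supports of the translated bumps, bound the weight by $(1+|\xi|)^\delta\lesssim 2^{j\beta\delta}$ on each block, use $|h_j|\le 1$ together with the support length $\lesssim 2^{-j\alpha}$ and the count $\beta_j\asymp 2^{j\beta}$, and sum the resulting geometric series $\sum_j 2^{j(\beta(1+\delta-q/2)-\alpha)}$. The bookkeeping of exponents matches the paper's displayed estimate exactly.
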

\begin{proof} [Proof of Lemma \ref {TimeLem}]
Taking the inverse Fourier transform of both sides of \eqref{time.lem.eq1}  yields
 \begin{eqnarray}\label{time.tm.pf.eq14}
 \psi_{\alpha, \beta, n}(x) & = & \frac{1-2^{-\alpha}}{2} g_0^\vee\big(\frac{1-2^{-\alpha}}{2} x\big)+\frac{2^\alpha-1}{2}
 \sum_{j=1}^\infty (\beta_j)^{-1/2} 2^{-j\alpha} g_1^\vee\big(\frac{2^\alpha-1}{2^{j\alpha+1}}x\big) \nonumber\\
 & & \quad \times e^{\pi i x (1-2^{-j\alpha})}     \Big(\sum_{l=0}^{\beta_j-1} e^{2\pi i x n(\gamma_j+l)}\Big)
 + \frac{2^\alpha-1}{2}
 \sum_{j=1}^{\infty} (\beta_{j})^{-1/2} 2^{-j\alpha}\nonumber\\
 & & \quad \times g_1^\vee\big(-\frac{2^\alpha-1}{2^{j\alpha+1}}x\big)
 \times e^{-\pi i x (1-2^{-j\alpha})}     \Big(\sum_{l=0}^{\beta_{j}-1} e^{-2\pi i x n(\gamma_{j}+l)}\Big),
 \end{eqnarray}
 where $g_0^\vee$ and $g_1^\vee $ are the inverse Fourier transforms of the functions $g_0$ and $g_1$ respectively.
Since both $g_0$ and $g_1$ are compactly supported and infinitely differentiable,
their inverse Fourier transforms $g_0^\vee$ and $g_1^\vee $ have polynomial decay at infinity. In particular
\begin{equation*}
|g_0^\vee (x)|+ | g_1^\vee (x)|\le C (1+|x|)^{-2}, \ x\in {\mathbb R}
\end{equation*}
for some positive constant $C$.
Hence
 \begin{eqnarray}\label{time.tm.pf.eq15}
 & &
 \Big(\int_{\mathbb R} |\psi_{\alpha, \beta,n}(x)|^2 (1+|x|)^{1-\epsilon} dx\Big)^{1/2}\nonumber \le
 \Big(\frac{1-2^{-\alpha}}{2}\Big)
  \Big(\int_{\mathbb R} |g_0^\vee(x)|^2 (1+|x|)^{1-\epsilon} dx\Big)^{1/2}\nonumber\\
  & & \quad \quad+ (2^\alpha-1)
 \sum_{j=1}^\infty (\beta_j)^{-1/2} 2^{-j\alpha}\Big (\int_{\mathbb R}\Big|g_1^\vee\big(\frac{2^\alpha-1}{2^{j\alpha+1}}x\big)\Big|^2
 \Big(\frac{\sin  \beta_j n \pi x}{\sin n\pi  x}\Big)^2  (1+ |x|)^{1-\epsilon} dx\Big)^{1/2}\nonumber\\
 & \le & C + C \sum_{j=1}^\infty 2^{-j(\beta+\alpha+\alpha \epsilon)/2} \Big(\int_{\mathbb R} (1+ 2^{-j\alpha} |x|)^{-2}
 \Big(\frac{\sin   \beta_j\pi x}{\sin \pi  x}\Big)^2   dx\Big)^{1/2}\nonumber\\
 & = & C + C \sum_{j=1}^\infty 2^{-j(\beta+\alpha+\alpha\epsilon)/2} \Big(\int_{-1/2}^{1/2} \Big(\sum_{l\in {\mathbb Z}} (1+ 2^{-j\alpha} |x+l|)^{-2}\Big)
 \Big(\frac{\sin   \beta_j\pi x}{\sin \pi  x}\Big)^2   dx\Big)^{1/2}\nonumber\\
& \le & C + C \sum_{j=1}^\infty  2^{-j(\beta/2+\alpha\epsilon/2)}\Big(\int_{-1/2}^{1/2}
 \Big(\frac{\sin  \beta_j \pi x}{\sin \pi x}\Big)^2   dx\Big)^{1/2}\nonumber\\
 & \le &  C + C \sum_{j=1}^\infty  2^{-j(\beta+\alpha\epsilon)/2}\Big(  \int_{-1/2}^{1/2}
 \big(\min(\beta_j, \frac{1}{2|x|})\big)^2  dx\Big)^{1/2}\nonumber\\
  & \le &
  C + C \sum_{j=1}^\infty  2^{-j\alpha\epsilon/2} <\infty,  \end{eqnarray}
where $C$ is a positive constant which could be different at different occurrences.
 \end{proof}
\begin{proof} [Proof of Lemma \ref {TimeLemp}]
Similar to the argument in Lemma \ref{TimeLem} we have
 \begin{eqnarray*}
 & &\Big(\int_{\mathbb R} |\psi_{\alpha, \beta,n}(x)|^p (1+|x|)^{\gamma} dx\Big)^{1/p}\nonumber \\
& \le &    C+  C \sum_{j=1}^\infty  2^{-j(\beta/2+\alpha(1-(1+\gamma)/p))}\Big(\int_{-1/2}^{1/2}
 \Big(\frac{\sin  \beta_j \pi x}{\sin \pi  x}\Big)^p  dx\Big)^{1/p}\nonumber\\
 &\le & \left\{\begin{array} {ll}
  C+ C \sum_{j=1}^\infty  2^{-j(\beta(1/p-1/2)+\alpha(p-1-\gamma)/p)} & {\rm if} \ 1<p<2\\
  C+C \sum_{j=1}^\infty  j 2^{-j(\beta/2-\gamma\alpha)}  & {\rm if} \ p=1\end{array}\right.\nonumber\\
&  < &\infty,
  \end{eqnarray*}
from which the lemma follows.
\end{proof}

\begin {proof} [Proof of Lemma \ref {FreqLem}]
By \eqref{time.lem.eq1}, we have
\begin{eqnarray}
 & &\int_{\mathbb R} |\widehat {\psi_{\alpha, \beta, n}}(\xi)|^q (1+|\xi|)^\delta d\xi\nonumber\\
 & = &
\int_{\mathbb R} |h_0(\xi)|^q  (1+|\xi|)^\delta d\xi   +
 \sum_{j= 1}^\infty\sum_{l=0}^{\beta_j-1}
\beta_{j}^{-q/2} \int_{\mathbb R} |h_j(\xi-n(\gamma_j+l))|^q (1+|\xi|)^{\delta} d\xi\nonumber\\
 & & +   \sum_{j=1}^\infty\sum_{l=0}^{\beta_j-1} \beta_{j}^{-q/2} \int_{\mathbb R} |h_{j}(-\xi-n(\gamma_{j}+l))|^q (1+|\xi|)^{\delta} d\xi\nonumber\\
& \le & C   + C
 \sum_{j= 1}^\infty\sum_{l=0}^{\beta_j-1}
2^{-\beta j(q/2-\delta)} \int_{\mathbb R} |h_j(\xi-n(\gamma_j+l))|^q  d\xi\nonumber\\
 & & + C  \sum_{j=1}^\infty\sum_{l=0}^{\beta_j-1} 2^{-\beta j(q/2-\delta)} \int_{\mathbb R} |h_{j}(-\xi-n(\gamma_{j}+l))|^q  d\xi\nonumber\\
 &\le  & C+ C \sum_{j\ge 1} 2^{j\beta(\delta-q/2+1)-\alpha j}<\infty,
\end{eqnarray}
where $C$ is a positive constant  which could be different at different occurrences.
Hence  the lemma  is established.
\end{proof}

\bigskip

{\bf Acknowledgement} \quad The first named author would like to thank Professors Christopher Heil, Palle Jorgensen, and Gestur Olafsson for their stimulating discussions.  The second named author contributed to this project while visiting the  Department of Mathematics, at Vanderbilt University on his sabbatical during the fall of 2009. The second named author would like to thank Professor Akram Aldroubi and the department for the hospitality.

\end{document}